 \newtheorem{thm}{Theorem}[section]
 \newtheorem{lem}[thm]{Lemma}
 \theoremstyle{definition}
 \numberwithin{equation}{section}
\begin{document}
\title{Global Solutions for Micro-Macro Models of Polymeric Fluids}
\author{Zhen Lei\footnote{Shanghai Key Laboratory for
Contemporary Applied Mathematics; Key Laboratory of Nonlinear
Mathematical Models and Methods of Ministry of Education; School
of Mathematical Sciences, Fudan University, Shanghai 200433,
China. {\it Email:
 leizhn@yahoo.com, zlei@fudan.edu.cn}}\and
 Yun Wang\thanks{The Institute of Mathematical Sciences,
  The Chinese University of Hong Kong, Shatin, N.T., Hong Kong ({\tt
  ywang@math.cuhk.edu.hk})}}
\date{\today}
\maketitle

\begin{abstract}
We provide a new proof for the global well-posedness of systems
coupling fluids and polymers in two space dimensions. Compared to
the well-known existing method based on the losing a priori
estimates, our method is more direct and much simpler. The
co-rotational FENE dumbbell model and the coupling Smoluchowski
and Navier-Stokes equations are studied as examples to illustrate
our main ideas.
\end{abstract}

\maketitle

\textbf{Keywords}: FENE dumbbell model, co-rotational,
Smoluchowski equation, global existence

\textbf{MSC}: 35Q35, 76D03

\section{Introduction}
The dynamics of many polymeric fluids are described by two-scale
micro-macro models. The systems usually consist of a macroscopic
momentum equation and a microscopic Fokker-Planck type equation.
The fluid is described by the macroscopic equation (sometimes the
incompressible Navier-Stokes equations), with an induced elastic
stress. The stress is the micro-macro interaction. The particles
in the system are represented by a probability distribution
$\psi(t,x,R)$ or $\psi(t,x,m)$ that depends on time $t$,
macroscopic physical location $x$ and particle configuration $R$
or $m$. The Lagrangian transport of the particles is modeled using
a Taylor expansion of the velocity field, which accounts for a
drift term that depends on the spatial gradient of velocity. The
system attempts to describe the behavior of this complex mixture
of polymers and fluids. For more physical and mechanical
backgrounds, see \cite{BCA, DE}.

The FENE (Finite Extensible Nonlinear Elastic) dumbbell model is
one of the typical and extensively studied micro-macro models. In
this model a polymer is idealized as an "elastic dumbbell"
consisting of two "beads" joined by a spring which can be modeled
by a vector $R$. Mathematically, this system reads
\begin{equation}\label{1.1} \left\{
\begin{array}{l}
\displaystyle\partial_t u + (u\cdot\nabla)u + \nabla
p = \nu \Delta u + {\rm div}~\tau,\quad x \in \mathbb{R}^2,\\[3mm]
{\rm div}~u =0,\quad x \in \mathbb{R}^2,\\[3mm]
\partial_t \psi + (u\cdot \nabla)\psi = {\rm div}_R \big[-W(u) \cdot R\psi + \beta
\nabla_R\psi\\
\quad\ \  \ \ \ \ \ \ \ \ \ + \nabla_R\mathcal{U} \psi\big],\quad
(x, R) \in
\mathbb{R}^2 \times B(0,R_0),\\[3mm]
(\nabla_R \mathcal{U}\psi + \beta \nabla_{R}
\psi)\cdot \textbf{n} = 0,\quad \mbox{on}\ \partial B(0,R_0),\\[3mm]
t=0: u(t, x) = u_0(x),\ \ \ \psi(t, x, R) = \psi_0(x,R).
\end{array}\right.
\end{equation}
In the above system, $u=u(t,x)$ denotes the velocity field of the
fluid, $p=p(t,x)$ denotes the pressure, $\psi(t,x,R)$ is the
distribution function for the internal configuration, $\nu>0$ is
the viscosity of the fluid and $\beta$ is related to the
temperature of the system.  Moreover, the spring potential
$\mathcal{U}$ and the induced elastic stress $\tau$ is given by
\begin{equation}\label{1.1-1}
\mathcal{U}(R) = -k\ln(1-|R|^2/|R_0|^2),\quad \tau_{ij} =
\int_{B(0,R_0)} (R_i \otimes \nabla_{R_j} \mathcal{U})\psi(t,x, R)
dR.
\end{equation}
Here $k > 0$ is a constant. The boundary condition  insures the
conservation of the polymer density. Assume that $W(u)=
\frac{\nabla u - (\nabla u)^t}{2}$, which corresponds to the
co-rotational case. For simplicity of writing, assume that
$\beta=1$ and $R_0 =1$ and denote $B(0,R_0)$ by $B$. In what
follows, without special claim, $\nabla$ represents $\nabla_x$,
and ${\rm div}$ represents ${\rm div}_x$.

The Smoluchowski equation coupled with the incompressible
Navier-Stokes equations is another extensively studied micro-macro
model. The Smoluchowski equation describes the temporal evolution
of the probability distribution function $\psi$ for directions of
rod-like particles in a suspension. Mathematically, the system
reads
\begin{equation}\label{1.2} \left\{
\begin{array}{l}
\displaystyle\partial_t u + (u\cdot\nabla)u + \nabla p
= \nu \Delta u + {\rm div}~\tau, \quad x \in \mathbb{R}^2,\\[3mm]
{\rm div}~u =0, \quad x \in \mathbb{R}^2,\\[3mm]
\partial_t \psi + (u\cdot \nabla)\psi + {\rm div}_g(G(u, \psi)\psi)- \Delta_g \psi=0,\quad (x, m) \in \mathbb{R}^2\times M,\\[2mm]
t=0: u(t, x) = u_0(x),\ \ \ \psi(t, x, m) = \psi_0(x,m).
\end{array}\right.
\end{equation}
Here $M$ is a $d$-dimensional smooth compact Riemannian manifold
without boundary of
 and $dm$ is the Riemannian volume element of $M$,
$u=u(t,x)$ and $p=p(t,x)$ denote the velocity field and the
pressure of fluid respectively, $\psi=\psi(t,x,m)$ is the
distribution function, $G(u, \psi)= \nabla_g \mathcal{U} + W$
stands for a meanfield potential resulting from the excluded
volume effects due to steric forces between molecules with $W=
c_\alpha^{ij} (m)\partial_j u_i$. Besides, the added stress tensor
$\tau$ and the potential $\mathcal{U}$ are given by
\begin{equation}\label{1.2-1} \left\{
\begin{array}{l}
\mathcal{U}(t,x,m) = \displaystyle \int_M K(m,q) \psi(t,x,
q)dq,\\[3mm]
\tau_{ij}(t,x) =\displaystyle \int_M \gamma_{ij}^{(1)}(m)
\psi(t,x,m)dm\\[3mm] \quad\ \ \ \ \displaystyle+ \int_M\int_M
\gamma_{ij}^{(2)}(m_1, m_2)\psi(t,x,m_1)\psi(t,x,m_2) dm_1dm_2.
\end{array}\right.
\end{equation}
Here the kernel $K$ is a smooth symmetric function defined on
$M\times M$. $\gamma_{ij}^{(1)}$ and $\gamma_{ij}^{(2)}$ are
smooth, time independent and $x$ independent.

At present there have been extensive and systematical studies on
the existence and regularity theories of those 2D micro-macro
models of polymeric fluids \cite{BC1994, C, CM, CFTZ, CMa, LLZ,
Ma}. For example, the first global well-posedness result for FENE
\eqref{1.1} was derived by Lin, Zhang and Zhang \cite{LZZ} for
$k>6$.  Masmoudi \cite{Ma} extends it to the case of $k>0$ by a
crucial observation on the linear operator. Very recently the
global existence of weak solutions to the FENE dumbbell model of
polymeric flows for a very general class of potentials was also
obtained by Masmoudi \cite{Masmoudi2}. The global well-posedness
of nonlinear Fokker-Planck system coupled with Navier-Stokes
equations \eqref{1.2} in 2D has been proven by Constantin and
Masmoudi in \cite{CMa}. When the nonlinear Fokker-Planck equation
is driven by a time averaged Navier-Stokes system in 2D, global
well-posedness has been obtained by
Constantin-Fefferman-Titi-Zarnescu \cite{CFTZ}.

Most proofs of the above global well-posedness theorems are based
on an important analytic technique called losing a priori estimate
in the spirit of Bahouri-Chemin \cite{BC1994} and Chemin-Masmoudi
\cite{CM}. In \cite{LMZ}, we studied the blow-up criteria of a
macroscopic viscoelastic Oldroyd-B system avoiding using the
losing a priori estimates. The main purpose of this paper is to
extend the method in \cite{LMZ} to micro-macro models and provide
a new proof for  global well-posedness of the co-rotational FENE
dumbbell model \eqref{1.1} and the coupling Smoluchowski and
incompressible Navier-Stokes equations (\ref{1.2}). Compared to
the proofs of the theorems in \cite{Ma} and \cite{CMa}, which are
based on the technique of losing a priori estimates, ours are
direct and much simpler.

For the co-rotational FENE model (\ref{1.1}), we have

\begin{thm}\label{thm 1.1}
Assume that $u_0 \in H^s(\mathbb{R}^2), (s>2)$, $\psi_0 \in
H^s(\mathbb{R}^2; \mathcal{L}^r \cap \mathcal{L}^2)$ for some
$r\geq 2$ such that $(r-1)k>1$ with $\int_{B} \psi_0 dR =1, a.e.$
in $x$. Then there exists a unique global solution $(u, \psi)$ of
the FENE model (\ref{1.1}) in $C([0, \infty); H^s) \times C([0,
\infty); H^s(\mathbb{R}^2; \mathcal{L}^2)).$ Moreover, $u\in
L_{loc}^2 (0, \infty; H^{s+1})$ and $\psi \in L^2_{loc}(0, \infty;
H^s(\mathbb{R}^2; \mathcal{L}^{1,r})).$
\end{thm}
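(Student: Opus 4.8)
The plan is to run the standard continuation scheme, commenting only on the parts that are new. Local-in-time existence and uniqueness of a solution in $C([0,T^\ast);H^s)\times C([0,T^\ast);H^s(\mathbb R^2;\mathcal L^2))$, together with a blow-up criterion (the solution extends past $T^\ast$ as long as a suitable norm stays finite), are obtained in the usual way: regularize the drift and Fokker--Planck operators, close uniform $H^s\times H^s(\mathcal L^2)$ estimates by energy methods, pass to the limit, and estimate the difference of two solutions in $L^2\times\mathcal L^2$ for uniqueness. The whole point is then the global-in-time a priori control of $X(t):=\|u(t)\|_{H^s}^2+\|\psi(t)\|_{H^s(\mathcal L^2)}^2$, and this is where the co-rotational structure of \eqref{1.1} is used essentially and elementarily.

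Set $\psi_\infty(R)=(1-|R|^2)^k$, so $\nabla_R\psi_\infty=-\nabla_R\mathcal U\,\psi_\infty$, and let $g=\psi/\psi_\infty$. Antisymmetry of $W(u)=\tfrac12\big(\nabla u-(\nabla u)^t\big)$ gives ${\rm div}_R\big(W(u)R\big)={\rm tr}\,W(u)=0$, $W(u)R\cdot R=0$ (so the $R$-drift is tangent to $\partial B$, with no boundary term), and $W(u)R\cdot\nabla_R\mathcal U=\tfrac{2k}{1-|R|^2}\,W(u)R\cdot R=0$; hence the drift $-{\rm div}_R\big(W(u)R\psi\big)=-\big(W(u)R\big)\cdot\nabla_R\psi$ is a measure-preserving transport in $R$ that leaves $\psi_\infty$ invariant, and $g$ satisfies $\partial_tg+u\cdot\nabla g+\big(W(u)R\big)\cdot\nabla_Rg=\psi_\infty^{-1}{\rm div}_R\big(\psi_\infty\nabla_Rg\big)$. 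Testing this against $|g|^{p-2}g\,\psi_\infty$ and using the no-flux boundary condition, the equilibrium-relative density $\rho_p(t,x):=\int_B|g|^p\psi_\infty\,dR$ obeys, for every $p\in[1,\infty)$, $\partial_t\rho_p+u\cdot\nabla\rho_p+c_p\int_B\psi_\infty\big|\nabla_R(g^{p/2})\big|^2\,dR=0$, with \emph{no} appearance of $\nabla u$. From this I would read off the three low-order facts that drive the argument: (i) $\int_B\psi\,dR\equiv1$; (ii) since $\rho_p$ then obeys a transport inequality and $u$ is divergence free, $\|\rho_p(t)\|_{L^q_x}$ is non-increasing for every $q>0$, so $\psi\in L^\infty\big([0,\infty);(L^\infty_x\cap L^2_x)(\mathcal L^r\cap\mathcal L^2)\big)$ with bounds depending only on $\|\psi_0\|_{H^s(\mathcal L^r\cap\mathcal L^2)}$ (using $H^s_x\hookrightarrow L^\infty_x$); (iii) $\int_0^\infty\!\int_{\mathbb R^2}\!\int_B\psi_\infty|\nabla_Rg|^2\,dR\,dx\,dt<\infty$.

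The next step turns the $\mathcal L^r$-control of $\psi$ into an $L^\infty$ bound on the elastic stress. Since $R_i\partial_{R_j}\mathcal U=2kR_iR_j/(1-|R|^2)$, writing $\psi=g\psi_\infty$ and using H\"older's inequality in $R$ with exponents $r,r'$ gives $|\tau(t,x)|\le C\,\|\psi(t,x,\cdot)\|_{\mathcal L^r}\Big(\int_B|R|^{2r'}(1-|R|^2)^{a(k,r)}\,dR\Big)^{1/r'}$, and the weight exponent $a(k,r)$ is integrable near $\partial B$ \emph{exactly} when $(r-1)k>1$ --- which is the hypothesis. Hence $\tau\in L^\infty\big([0,\infty)\times\mathbb R^2\big)\cap L^\infty\big([0,\infty);L^2(\mathbb R^2)\big)$, with a bound depending only on the data, and likewise $|\nabla\tau(t,x)|\le C\,\|\nabla_x\psi(t,x,\cdot)\|_{\mathcal L^r}$. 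Consequently the momentum equation becomes the two-dimensional Navier--Stokes system with a forcing ${\rm div}\,\tau$ controlled in $L^\infty_tL^2_x$-type norms, and the basic energy estimate gives $u\in L^\infty_{\rm loc}\big([0,\infty);L^2\big)\cap L^2_{\rm loc}\big([0,\infty);H^1\big)$.

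It remains to propagate the top-order norm $X$, and this is the one genuinely delicate step; it is precisely here that the losing a priori estimates of Bahouri--Chemin and Chemin--Masmoudi are classically invoked, and the gain of the present route is that the bounds above allow a \emph{direct} argument. Applying $\Lambda^s$ to \eqref{1.1}, pairing the momentum part with $\Lambda^su$ in $L^2_x$ and the Fokker--Planck part with $\Lambda^sg$ in $L^2_x(\mathcal L^2)$, and adding: the commutators $[\Lambda^s,u\cdot\nabla]$ contribute $\|\nabla u\|_{L^\infty}X$ together with lower-order terms that interpolate to a sublinear power of $X$; the only borderline term in $\Lambda^s\big((W(u)R)\cdot\nabla_R\psi\big)$, where all derivatives fall on $W(u)$, is absorbed into the viscous dissipation $\nu\|\nabla u\|_{H^s}^2$ and the Fokker--Planck dissipation $\|\psi\|_{H^s(\mathcal L^{1,2})}^2$ --- here one uses that, for $|\alpha|\ge1$, $\partial^\alpha g$ has zero $\psi_\infty$-average, so the spectral-gap property of the Fokker--Planck operator (the ``crucial observation on the linear operator'' of \cite{Ma}, valid for every $k>0$) makes that dissipation control the full $\mathcal L^2$-norm; and the coupling term $\int\Lambda^s{\rm div}\,\tau\cdot\Lambda^su=-\int\Lambda^s\tau:\Lambda^s\nabla u$ is handled by the weighted-H\"older bound on $\Lambda^s\tau$ in terms of $\|\psi\|_{H^s(\mathcal L^{1,r})}$. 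The residual transport term requires a bound on $\int_0^T\|\nabla u\|_{L^\infty}\,dt$; this I would obtain directly --- not through a scale of losing spaces --- from the parabolic smoothing of the forced 2D Navier--Stokes equation: the stress being bounded, one gains enough integrability of $\nabla^2u$ to run a Brezis--Gallouet--Wainger estimate, giving $\int_0^T\|\nabla u\|_{L^\infty}\lesssim_T 1+\log^{1/2}\sup_{[0,T]}X$. A Gronwall argument then yields a finite bound for $X$ on every $[0,T]$, hence $u\in L^\infty_{\rm loc}(H^s)\cap L^2_{\rm loc}(H^{s+1})$, $\psi\in L^\infty_{\rm loc}(H^s(\mathcal L^2))\cap L^2_{\rm loc}(H^s(\mathcal L^{1,r}))$, and together with the blow-up criterion this proves global well-posedness. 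I expect this last interlocked estimate --- balancing the top-order transport commutators, the $W(u)$-drift term, and the stress coupling against the two dissipations without recourse to losing estimates --- to be the main obstacle; the enabling facts are the co-rotational cancellation, the a priori $L^\infty$ bound on $\tau$, and the spectral gap for $k>0$.
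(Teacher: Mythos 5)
Your Steps I--II (the transported $\mathcal L^r$ bounds from the co-rotational structure, the $(r-1)k>1$ H\"older argument giving $\tau\in L^\infty_t(L^2\cap L^\infty)$, and the basic energy estimate for $u$) match the paper's Steps I--II. The gap is in the final, decisive step: the claim that, because $\tau$ is bounded, ``parabolic smoothing of the forced 2D Navier--Stokes equation'' yields enough integrability of $\nabla^2u$ to run Brezis--Gallouet--Wainger and conclude $\int_0^T\|\nabla u\|_{L^\infty}\,dt\lesssim_T 1+\log^{1/2}\sup X$. This is exactly the borderline that does \emph{not} work. With $f=\mathbb P\,{\rm div}\,\tau$ and only $\tau\in L^\infty_t(L^2\cap L^\infty)$, Duhamel gives $\|\Delta_q\nabla u\|_{L^1_tL^\infty}\lesssim \|\tau\|_{L^\infty_{t,x}}$ uniformly in $q$, i.e.\ $u\in\tilde L^1_t\dot C^1$, but the sum over $q$ (hence $\nabla u\in L^1_tL^\infty$) diverges logarithmically; likewise an a priori $L^2_tH^2$ bound for $u$ would require ${\rm div}\,\tau\in L^2_tL^2_x$, i.e.\ one $x$-derivative of $\psi$ in $\mathcal L^r$ --- and controlling $\nabla_x\psi$ (equivalently H\"older/Besov norms of $\tau$) is precisely what needs $\int_0^T\|\nabla u\|_{L^\infty}\,dt$, so the argument is circular. (The vorticity estimate shows the same obstruction: $\int{\rm curl\,div}\,\tau\,\omega$ cannot be closed with $\nu\|\nabla\omega\|_{L^2}^2$ using $\tau\in L^2$ alone.) Without the BGW input, your Gronwall closure at the $H^s$ level collapses back to $X\le C\exp(C\int\|\nabla u\|_{L^\infty})$ with no independent control of the exponent. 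A secondary soft spot is the $H^s$ energy estimate itself: the term where all $s$ derivatives fall on $W(u)$, and its absorption by the two dissipations via the spectral gap, is asserted rather than proved, and this is the delicate point of \cite{LZZ, Ma}.

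The paper circumvents precisely this borderline without losing estimates, but by a different mechanism than the one you propose: it quantifies $\psi$ at the frequency-localized level, $N_q=\|\Delta_q\psi\|_{\mathcal L^r}$ and $D(t)=\sup_q 2^{\alpha q}\|N_q\|_{L^\infty}$, proves $A(t)\le C(1+D(t))$ for $A(t)=\sup_{s<t}\|u\|_{\dot C^{1+\alpha}}$ (Step III), and a Gronwall bound $D(t)\le C(D(0)+1)\exp\{C\int_0^t(\|\nabla u\|_{L^\infty}+1)\}$ (Step IV). The loop is then closed by two ingredients you did not use: the last assertion of Lemma \ref{lemma 3.2} (Chemin--Masmoudi), which makes $\sup_q\int_{t_0}^T\|\Delta_q\nabla u\|_{L^\infty}\,ds\le\epsilon$ on a suitable subinterval, and the logarithmic interpolation Lemma \ref{lemma 2.2}, which turns this into $\int_{t_*}^t\|\nabla u\|_{L^\infty}\le C(1+\int\|u\|_{L^2}+\epsilon\ln(e+D(t)))$. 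The smallness of the coefficient $\epsilon$ in front of the logarithm (not a square root of the logarithm) is what lets $e+D(t)\le CC_*(D(0)+1)(e+D(t))^{C\epsilon}$ close. If you want to salvage your outline, you should replace the BGW step by this $\tilde L^1(t_0,T;\dot C^1)$-smallness plus log-interpolation argument, or find an independent a priori bound on $u$ in $L^2_tH^2$ --- which the available stress bounds do not provide.
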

For the definition of $\mathcal{L}^r$ and $\mathcal{L}^{1,r}$,
please refer to section 2.

Similarly, for the coupling Smoluchowski and Navier-Stokes system
(\ref{1.2}), we have

\begin{thm}\label{thm 1.2}
Take $u_0\in W^{1+\epsilon,r}(\mathbb{R}^2)\cap L^2(\mathbb{R}^2)$
and $\psi_0\in W^{1,r}(\mathbb{R}^2, H^{-s}(M)),$ for some $r> 2$,
$\epsilon >0$, $s>\frac{d}{2} + 1$ and $\psi_0\geq 0$. $\int_M
\psi_0 dm\in L^1(\mathbb{R}^2) \cap L^\infty(\mathbb{R}^2)$. Then
(\ref{1.2}) has a global solution in $u\in
L_{loc}^{\infty}(0,\infty; W^{1,r})\cap L_{loc}^2(0, \infty;
W^{2,r})$ and $\psi \in L_{loc}^\infty(0,\infty;
W^{1,r}(\mathbb{R}^2; H^{-s}(M)))$. Moreover, for $T>T_0>0$, we
have $u\in L^\infty((T_0, T); W^{2-\epsilon, r}).$
\end{thm}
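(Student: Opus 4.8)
The strategy mirrors the one used for Theorem 1.1: establish a global-in-time a priori bound using only the natural energy/entropy dissipation of the Smoluchowski equation and a logarithmically-enhanced parabolic estimate for the velocity, thereby avoiding the "losing a priori estimates" machinery. First I would set up the low-order conserved/dissipated quantities. Integrating the Smoluchowski equation over $M$ shows $\rho(t,x)=\int_M\psi\,dm$ satisfies a linear transport equation $\partial_t\rho+u\cdot\nabla\rho=0$ (the drift and diffusion terms integrate to zero by the divergence theorem on the closed manifold $M$), so $\|\rho(t)\|_{L^p}=\|\rho_0\|_{L^p}$ for all $p\in[1,\infty]$; in particular $\psi\ge0$ is propagated and the total mass is conserved. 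Next, the basic energy law for $u$ together with the entropy/free-energy estimate for $\psi$ (multiply the Smoluchowski equation by $\log\psi+\mathcal U$ and integrate, using the structure of $G=\nabla_g\mathcal U+W$, $W=c^{ij}_\alpha\partial_j u_i$, and the smoothness and symmetry of $K$) yields an $L^\infty_t L^2_x$ bound on $u$ and an $L^2_tL^2_x$ bound on $\nabla u$, plus control of $\int\int\psi|\nabla_g\log\psi|^2$. Crucially, because $M$ is compact and $K,\gamma^{(1)},\gamma^{(2)}$ are smooth, the stress $\tau$ is controlled in terms of low norms of $\rho$ and $\psi$ (e.g. $\|\tau\|_{L^2_x}\lesssim \|\rho\|_{L^2_x}+\|\rho\|_{L^2_x}\|\rho\|_{L^\infty_x}$), so $\mathrm{div}\,\tau$ is a benign forcing in the velocity equation.

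The heart of the argument is a closed estimate for $A(t):=\|u(t)\|_{W^{1,r}}+\|\psi(t)\|_{W^{1,r}_x(H^{-s}_m)}$. For the $\psi$ part, differentiate the Smoluchowski equation in $x$; the transport term $u\cdot\nabla\nabla_x\psi$ is handled by the flow, the commutator $\nabla_x u\cdot\nabla_x\psi$ costs $\|\nabla u\|_{L^\infty}$, and the terms $\mathrm{div}_g(G\psi)$ and $\Delta_g\psi$ are estimated in the $H^{-s}(M)$ norm using $s>\tfrac d2+1$ so that multiplication by the smooth coefficients and by $\nabla u$ (which enters $W$) is bounded on $H^{-s}(M)$; this gives $\frac{d}{dt}\|\psi\|_{W^{1,r}_x(H^{-s}_m)}\lesssim (1+\|\nabla u\|_{L^\infty})\|\psi\|_{W^{1,r}_x(H^{-s}_m)}$. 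For the $u$ part, I would use the maximal-regularity / Stokes smoothing estimate: $\nabla u$ lies in $L^q_tW^{1,r}_x$ type spaces controlled by $\|\mathrm{div}\,\tau\|_{L^q_tL^r_x}$ and $\|u_0\|_{W^{1+\epsilon,r}}$, and since $r>2$ the embedding $W^{1,r}\hookrightarrow L^\infty$ is \emph{almost} enough but not quite for $\|\nabla u\|_{L^\infty}$; here one invokes the logarithmic inequality $\|\nabla u\|_{L^\infty}\lesssim 1+\|\nabla u\|_{BMO}\log\big(e+\|u\|_{W^{2-\epsilon',r}}\big)$ combined with the fact that $\|\nabla u\|_{BMO}$ (or a scaling-invariant norm of $\tau$) is controlled by the \emph{low-order, already-global} quantities $\|\rho\|_{L^\infty}$, $\|\rho\|_{L^2}$. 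Feeding this into a Gronwall argument with the $\psi$-estimate closes the bound on $A(t)$ on every finite interval.

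The main obstacle I expect is precisely this last coupling: one must show that the quantity multiplying the logarithm of the high norm — a $BMO$- or $L^\infty$-type norm of $\nabla u$, equivalently a critical norm of $\tau$ — is bounded globally in time using \emph{only} the energy/entropy estimates and the transported $L^p$ bounds on $\rho$, with no appeal to the high norm itself. This is the analogue, in the $W^{1,r}$ framework, of the co-rotational cancellation used for FENE, and it hinges on the specific algebraic form of the stress $\tau$ in \eqref{1.2-1}: the linear-in-$\psi$ piece contributes $\int_M\gamma^{(1)}\psi\,dm$ which is bounded by $\|\rho\|_{L^\infty}$ pointwise, and the quadratic piece by $\|\rho\|_{L^\infty}^2$, both global. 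Once $\|\tau\|_{L^\infty_tL^\infty_x\cap L^\infty_tL^2_x}$ is in hand, elliptic regularity upgrades $u$ to $L^2_{loc}W^{2,r}$ and, via maximal regularity with the $L^\infty_t$ forcing, to $L^\infty_{loc}((T_0,T);W^{2-\epsilon,r})$ for $t$ bounded away from zero, and standard approximation/compactness plus the a priori bounds deliver the global solution in the asserted spaces. Uniqueness on finite intervals follows from a difference estimate in the lower-order norms $L^2_x\times W^{0,r}_x(H^{-s}_m)$, closing by Gronwall once the solution norms are finite.
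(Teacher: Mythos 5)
Your Steps I--II coincide with the paper's: the transported density $\rho=\int_M\psi\,dm$ gives $\tau\in L^\infty_t(L^2_x\cap L^\infty_x)$ from the smooth kernels, and the energy estimate controls $u$ in $L^\infty_tL^2_x\cap L^2_t\dot H^1_x$ (the entropy estimate you add is harmless but not needed). The genuine gap is in the step you yourself flag as the heart of the matter: you close the log-Gronwall by asserting that $\|\nabla u\|_{BMO}$ (the coefficient of the logarithm) is bounded globally in time by the low-order quantities $\|\rho\|_{L^2},\|\rho\|_{L^\infty}$. That does not follow from $\tau\in L^\infty_t(L^2\cap L^\infty)$. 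For the nonstationary equation the Duhamel term $\int_0^t\nabla e^{\nu(t-s)\Delta}\mathbb P\,\mathrm{div}\,\tau\,ds$ involves an operator of size $(t-s)^{-1}$ on $L^\infty$, so a bounded stress yields only time-integrated bounds of the type $\|u\|_{\tilde L^1(0,T;\dot C^1)}=\sup_q\int_0^T\|\Delta_q\nabla u\|_{L^\infty}\,ds<\infty$, not a pointwise-in-time (nor even an $L^1_t$) bound on $\|\nabla u\|_{BMO}$; and the nonlinear term $u\otimes u$ is likewise not controlled at a critical level pointwise in time. Note also that $\sup_q\int\|\Delta_q\nabla u\|_{L^\infty}\,ds$ is strictly weaker than $\int\|\nabla u\|_{BMO}\,ds$ -- that discrepancy is exactly why a logarithmic correction is needed at all -- so your proposed Kozono--Taniuchi-type closure has no input to feed it.

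The paper resolves this differently, and this is the one idea your plan is missing: Lemma 3.2 (Chemin--Masmoudi) gives not merely finiteness but \emph{smallness} of $\|u\|_{\tilde L^1(t_0,T;\dot C^1)}\le\epsilon$ for $t_0$ close to $T$, and the logarithmic inequality used (Lemma 2.2) is a time-integrated one in which precisely this quantity multiplies the logarithm. Propagating a H\"older-type norm of $\psi$, $D(t)=\sup_{s<t}\sup_q 2^{\alpha q}\|\,\|H\Delta_q\psi\|_{L^2(M)}\|_{L^\infty_x}$ with $H=(-\Delta_g+I)^{-s/2}$, together with $A(t)\le C(1+B(t))\le C(1+D(t))$, Gronwall gives $D(t)\lesssim\exp\{C\int(\|\nabla u\|_{L^\infty}+1)\}$, and inserting Lemma 2.2 yields $e+D\le CC_*(D(0)+1)(e+D)^{C\epsilon}$, which closes once $\epsilon$ is chosen with $C\epsilon<1$; boundedness of $A$, hence of $\|\nabla u\|_{L^\infty}$, then invokes the blow-up criterion of Constantin--Masmoudi. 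To repair your argument you should replace the unproved $BMO$ claim by this smallness mechanism (or otherwise produce a genuine $L^1_tBMO$ bound, which you have not indicated how to obtain). A secondary inaccuracy: after differentiating the Smoluchowski equation in $x$, the drift $W=c^{ij}_\alpha\partial_ju_i$ produces $\nabla^2u$, so your claimed inequality $\frac{d}{dt}\|\psi\|_{W^{1,r}_x(H^{-s})}\lesssim(1+\|\nabla u\|_{L^\infty})\|\psi\|_{W^{1,r}_x(H^{-s})}$ needs an additional term involving $\|u\|_{W^{2,r}}$, patchable by the $L^2_tW^{2,r}$ maximal regularity you already invoke.
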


We end this introduction by mentioning some other results on
micro-macro models. Global existence of weak solutions can be
found in \cite{BSS, LM} and local existence of strong solutions
are studied in \cite{ELZ, JLL, ZZ}. For macroscopic models, we
refer the readers to \cite{Lei1, Lei2, LZ, LLZ1, LLZh, CZ} as
references.

The paper is organized as follows. In section 2, we give some
preliminaries. Then we give some a priori estimates for the FENE
model \eqref{1.1} in section 3 and the coupling Smoluchowski and
Navier-Stokes equations \eqref{1.2} in section 4. The a priori
estimates obtained in sections 3 and 4 are enough to get the
global existence of systems \eqref{1.1} and \eqref{1.2} and prove
Theorem 1.1 and Theorem 1.2 \cite{Ma, CMa}.

\section{Definitions and Useful Lemmas}

We will use the Littlewood-Paley decomposition in the following
sections. Define $\mathcal{C}$ to be the ring
$$\mathcal{C} = \{\xi\in \mathbb{R}^2: \frac{3}{4} \leq |\xi| \leq \frac{8}{3}\},$$
and define $\mathcal{D}$ to be the ball
$$\mathcal{D} = \{\xi\in \mathbb{R}^2: |\xi|\leq \frac43\}.$$
Let $\chi$ and $\varphi$ be two smooth nonnegative radial
functions supported respectively in $\mathcal{D}$ and
$\mathcal{C}$, such that
$$\chi(\xi) + \sum_{q\geq 0} \varphi(2^{-q} \xi) =1\ \ \mbox{for}\ \xi\in \mathbb{R}^2
,\ \ \mbox{and}\ \sum_{q\in \mathbb{Z}} \varphi(2^{-q}\xi)=1\ \
\mbox{for}\ \xi\in \mathbb{R}^2\setminus \{0\}.$$

Let us denote by $\mathcal{F}$ the Fourier transform on
$\mathbb{R}^2$ and denote $$h= \mathcal{F}^{-1}\varphi,\ \ \ \ \ \
\tilde{h} = \mathcal{F}^{-1}\chi.$$ The frequency localization
operator is defined by
$$\Delta_q u =  \mathcal{F}^{-1}\left[\varphi(2^{-q}\xi)\mathcal{F}(u)\right] =
2^{2q} \int_{\mathbb{R}^2} h(2^q y) u(x-y) dy, $$ and
$$S_q u = \mathcal{F}^{-1}\left[\chi(2^{-q}\xi)\mathcal{F}(u)\right] =
2^{2q} \int_{\mathbb{R}^2} \tilde{h}(2^q y) u(x-y) dy. $$

Hence, for $s<2/p$ or $s=2/p$ and $r=1$, the homogeneous Besov
space $\dot{B}_{p, r}^s$ is defined as the closure of compactly
supported smooth functions under the norm
$\|\cdot\|_{\dot{B}_{p,r}^s},$
$$\|u\|_{\dot{B}_{p,r}^s} = \left\|
(2^{qs} \|\Delta_q u\|_{L^p})_{q\in
\mathbb{Z}}\right\|_{l^r(\mathbb{Z})}.$$ when $p=r=\infty$,
$s=k+\alpha$ where $k\in \mathbb{N}$ and $\alpha\in (0,1)$, then
$\dot{B}_{p,r}^s$ turns to be the homogeneous H\"{o}lder space
$\dot{C}^{k+\alpha}$.

Another kind of space to be used is $\tilde{L}^p(t_1, t_2;
\dot{C}^r)$, which is the space of distributions $u$ such that
$$\|u\|_{\tilde{L}^p(t_1, t_2; \dot{C}^r)} \triangleq \sup_{q\in \mathbb{Z}} 2^{qr} \|\Delta_q u\|_{L^p(t_1, t_2; L^\infty)}
< \infty.$$

For the FENE model, let
$$\psi_\infty (R) =
\frac{e^{-\mathcal{U}(R)}}{\int_B e^{-\mathcal{U}(R)} dR} = c (1-
|R|^2)^k,$$ where the constant $c$ is given such that $\int_B
\psi_\infty dR =1.$ In fact, $(u,\psi)=(0, \psi_\infty)$ defines a
stationary solution of (\ref{1.1}).

For $r\geq 1$, denote $\mathcal{L}^r$ and $\mathcal{L}^{1,r}$ the
weighted spaces
$$\mathcal{L}^r =\left\{\psi: \|\psi\|_{\mathcal{L}^r}^r =
\int_B \psi_\infty \left|\frac{\psi}{\psi_\infty}\right|^rdR <
\infty\right\}$$$$\mathcal{L}^{1, r} =\left\{\psi\in
\mathcal{L}^r: |\psi|_{\dot{\mathcal{L}}^{1,r}}^r = \int_B
\psi_\infty
\left|\nabla_R\left(\frac{\psi}{\psi_\infty}\right)\right|^rdR <
\infty\right\}.$$

We will need to use the following well-known inequalities.

\begin{lem}[{\rm Bernstein Inequalities \cite{Ch}}]
\label{lemma 2.1} For $s\in \mathbb{R}$, $1\leq p\leq r\leq
\infty$ and $q\in \mathbb{Z}$, one has
$$\|\Delta_q u\|_{L^r(\mathbb{R}^d)}\leq C\cdot 2^{d(\frac1p-\frac1r)q}\|\Delta_q u\|_{L^p(\mathbb{R}^d)},$$
$$c 2^{qs} \|\Delta_q u\|_{L^p} \leq \|\nabla^s \Delta_q u\|_{L^p}
\leq C  2^{qs} \|\Delta_q u\|_{L^p},$$
$$\||\nabla|^s S_q u\|_{L^p} \leq C 2^{qs}\|u\|_{L^p},$$
$$c e^{-C 2^{2q} t } \|\Delta_q u\|_{L^\infty} \leq \|e^{t\Delta} \Delta_q u\|_{L^\infty}
\leq C e^{-c 2^{2q} t}\|\Delta_q u\|_{L^\infty}.$$ Here $C$ and
$c$ are positive constants independent of $s, p$ and $q$.
\end{lem}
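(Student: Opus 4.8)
The plan is to reduce each of the four inequalities to Young's inequality for convolutions, using only two facts: that $\Delta_q u$ (resp. $S_q u$) has Fourier transform supported in the dilated annulus $2^q\mathcal{C}$ (resp. the ball $2^q\mathcal{D}$), and that the Littlewood--Paley kernels are exact dilates of fixed Schwartz functions. Throughout I would fix an auxiliary smooth radial $\tilde\varphi$, supported in a slightly enlarged annulus and identically $1$ on $\mathcal{C}$, so that $\varphi(2^{-q}\xi)=\tilde\varphi(2^{-q}\xi)\varphi(2^{-q}\xi)$ and hence $\Delta_q u = g_q * \Delta_q u$ with $g_q(x)=2^{dq}g(2^q x)$, $g=\mathcal{F}^{-1}\tilde\varphi\in\mathcal{S}(\mathbb{R}^d)$. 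For the first inequality, Young's inequality with $1+\frac1r=\frac1m+\frac1p$ gives $\|\Delta_q u\|_{L^r}\le\|g_q\|_{L^m}\|\Delta_q u\|_{L^p}$, and the scaling identity $\|g_q\|_{L^m}=2^{dq(1-\frac1m)}\|g\|_{L^m}=2^{dq(\frac1p-\frac1r)}\|g\|_{L^m}$ closes the estimate; the embedding $L^p\hookrightarrow L^r$ on frequency-localized functions is exactly this statement.

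For the derivative inequalities, I would write the multiplier $|\xi|^s$ on the Fourier support of $\Delta_q u$ as $|\xi|^s\varphi(2^{-q}\xi)=2^{qs}\phi(2^{-q}\xi)\varphi(2^{-q}\xi)$ with $\phi(\eta)=|\eta|^s\tilde\varphi(\eta)$; since $\tilde\varphi$ is supported in an annulus bounded away from the origin, $\phi\in C_c^\infty$ and $\mathcal{F}^{-1}\phi\in\mathcal{S}$ for every real $s$. Then $\nabla^s\Delta_q u=2^{qs}\bigl(2^{dq}(\mathcal{F}^{-1}\phi)(2^q\cdot)\bigr)*\Delta_q u$, and Young's inequality (exponents $1,p\to p$) yields $\|\nabla^s\Delta_q u\|_{L^p}\le C2^{qs}\|\Delta_q u\|_{L^p}$; repeating the argument with $|\xi|^{-s}$ in place of $|\xi|^s$ gives the matching lower bound. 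For the $S_q$ estimate (applied with $s\ge0$), the same manipulation leads to the multiplier $m(\eta)=|\eta|^s\chi(\eta)$; here $\chi$ is supported near the origin, but one checks directly that $\mathcal{F}^{-1}m\in L^1(\mathbb{R}^d)$ for $s\ge0$ (the $|\eta|^s$ singularity contributes only $|x|^{-d-s}$ decay at infinity), and Young's inequality again gives $\|\nabla^sS_qu\|_{L^p}\le C2^{qs}\|u\|_{L^p}$.

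For the heat-semigroup bounds, I would freeze $t$ and study the kernel $K_{q,t}=\mathcal{F}^{-1}\bigl[e^{-t|\xi|^2}\tilde\varphi(2^{-q}\xi)\bigr]$, for which $e^{t\Delta}\Delta_q u=K_{q,t}*\Delta_q u$. Rescaling $\xi=2^q\eta$ gives $K_{q,t}(x)=2^{dq}G(2^{2q}t,2^qx)$ with $G(\tau,y)=\mathcal{F}^{-1}_\eta\bigl[e^{-\tau|\eta|^2}\tilde\varphi(\eta)\bigr](y)$. On the support of $\tilde\varphi$ one has $|\eta|\ge c_0>0$, so $e^{-\tau|\eta|^2}\le e^{-c_0^2\tau/2}e^{-|\eta|^2\tau/2}$, and repeated integration by parts in $\eta$ — each derivative of $e^{-\tau|\eta|^2/2}\tilde\varphi(\eta)$ staying uniformly bounded in $\tau\ge0$ — gives $|G(\tau,y)|\le C_N e^{-c\tau}(1+|y|)^{-N}$ for all $N$, hence $\|K_{q,t}\|_{L^1}\le Ce^{-c2^{2q}t}$ and, by Young's inequality, the upper bound. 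For the lower bound I would invert, writing $\Delta_q u=\mathcal{F}^{-1}\bigl[e^{t|\xi|^2}\tilde\varphi(2^{-q}\xi)\bigr]*(e^{t\Delta}\Delta_q u)$; on the relevant Fourier support $|\xi|\le C2^q$, so $e^{t|\xi|^2}\le e^{C2^{2q}t}$ and the same integration-by-parts scheme bounds the $L^1$ norm of this kernel by $Ce^{C2^{2q}t}$, which gives $ce^{-C2^{2q}t}\|\Delta_q u\|_{L^\infty}\le\|e^{t\Delta}\Delta_q u\|_{L^\infty}$.

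The main obstacle is the heat-kernel step: one must combine the Gaussian decay in the frequency variable, restricted to the annulus, with sufficient stationary-phase--type decay in the spatial variable to obtain a finite $L^1$ norm of $G(\tau,\cdot)$, while ensuring the exponential prefactor $e^{-c\tau}$ carries a constant independent of $\tau$; once this uniformity is established, the bound on $K_{q,t}$ follows by pure scaling. The remaining three inequalities are routine consequences of Young's inequality and the exact homogeneity of the Littlewood--Paley kernels, and do not present any real difficulty.
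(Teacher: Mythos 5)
Your proof is correct and is exactly the standard argument: the paper itself does not prove Lemma \ref{lemma 2.1} but cites it to Chemin's book, and the proof there is the same reduction you give — an auxiliary cut-off $\tilde\varphi$ equal to $1$ on the annulus (resp.\ ball), Young's inequality, exact dilation scaling of the kernels, and for the heat bounds the factoring out of $e^{-c\tau}$ from the symbol on the annulus together with integration by parts, plus the inverse multiplier $e^{t|\xi|^2}\tilde\varphi(2^{-q}\xi)$ for the lower bound. Your added precisions (the third inequality requires $s\ge 0$, and the smoothing/inverse-smoothing multipliers $|\eta|^{\pm s}\tilde\varphi(\eta)$ are legitimate because $\tilde\varphi$ is supported away from the origin) are consistent with how the lemma is actually used in the paper; the only cosmetic caveat is that the constants produced this way depend on $s$ through the $L^1$ norms of the fixed kernels, which is the standard reading of the statement.
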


We will also need the following lemma, whose proof  can be found
in \cite{LMZ}.

\begin{lem} \label{lemma 2.2}
Assume that $\beta >0$. Then there exists a positive constant
$C>0$ such that
$$\begin{array}{l}\displaystyle \int_t^T \|\nabla g(s, \cdot)\|_{L^\infty}ds \leq
C\left(
1+ \int_t^T \|g(s,\cdot)\|_{L^2}ds\right. \\[3mm]\ \ \ \ \ \ \ \ \ \ \ \displaystyle+\left.\sup_q \int_t^T \|\Delta_q
\nabla g(s,\cdot)\|_{L^\infty}ds \ln (e+ \int_t^T \|\nabla
g(s,\cdot)\|_{\dot{C}^\beta}ds)\right).\end{array}$$
\end{lem}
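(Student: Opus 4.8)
The plan is to prove this by a Littlewood--Paley decomposition of $\nabla g$ in the space variable, together with a judicious choice of the frequency cut-off, which is precisely the step that manufactures the logarithm. With $N\geq 0$ an integer to be fixed later, I split
$$\|\nabla g(s,\cdot)\|_{L^\infty}\leq \sum_{q<0}\|\Delta_q\nabla g(s,\cdot)\|_{L^\infty}+\sum_{0\leq q\leq N}\|\Delta_q\nabla g(s,\cdot)\|_{L^\infty}+\sum_{q>N}\|\Delta_q\nabla g(s,\cdot)\|_{L^\infty},$$
estimate the three pieces separately, and then integrate in $s$ over $[t,T]$.

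For the low frequencies, Bernstein's inequality (Lemma \ref{lemma 2.1}, in dimension $d=2$) gives $\|\Delta_q\nabla g\|_{L^\infty}\leq C2^{2q}\|\Delta_q g\|_{L^2}\leq C2^{2q}\|g\|_{L^2}$, and summing the convergent geometric series $\sum_{q<0}2^{2q}$ yields $\sum_{q<0}\|\Delta_q\nabla g(s,\cdot)\|_{L^\infty}\leq C\|g(s,\cdot)\|_{L^2}$, whose time integral is the $\int_t^T\|g\|_{L^2}\,ds$ term. For the high frequencies I use the definition of the H\"older norm, $\|\Delta_q\nabla g(s,\cdot)\|_{L^\infty}\leq 2^{-q\beta}\|\nabla g(s,\cdot)\|_{\dot{C}^\beta}$, so that $\sum_{q>N}\|\Delta_q\nabla g(s,\cdot)\|_{L^\infty}\leq C2^{-N\beta}\|\nabla g(s,\cdot)\|_{\dot{C}^\beta}$; integrating in $s$ and then choosing $N$ to be the smallest nonnegative integer with $2^{N\beta}\geq e+\int_t^T\|\nabla g(s,\cdot)\|_{\dot{C}^\beta}\,ds$ makes this contribution bounded by an absolute constant, while simultaneously forcing $N+1\leq C\big(1+\ln\big(e+\int_t^T\|\nabla g(s,\cdot)\|_{\dot{C}^\beta}\,ds\big)\big)$. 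Finally, for the middle range one simply counts, $\sum_{0\leq q\leq N}\|\Delta_q\nabla g(s,\cdot)\|_{L^\infty}\leq (N+1)\sup_q\|\Delta_q\nabla g(s,\cdot)\|_{L^\infty}$, whose time integral is at most $(N+1)\sup_q\int_t^T\|\Delta_q\nabla g(s,\cdot)\|_{L^\infty}\,ds$. Combining the three bounds with this value of $N$ and using $\ln(e+x)\geq 1$ to absorb the residual constants gives the claimed inequality.

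There is no genuine obstacle here; the one point that requires care is that the cut-off $N$ must be chosen in terms of the time-integrated quantity $\int_t^T\|\nabla g\|_{\dot{C}^\beta}\,ds$ --- a fixed number on $[t,T]$ --- rather than the pointwise-in-$s$ norm $\|\nabla g(s,\cdot)\|_{\dot{C}^\beta}$, so that $N$ is constant under the $ds$-integration and the middle-frequency count genuinely produces the logarithmic factor multiplying $\sup_q\int_t^T\|\Delta_q\nabla g(s,\cdot)\|_{L^\infty}\,ds$. One should likewise remember to include the inhomogeneous low-frequency block (the $q<0$ sum, equivalently $S_0\nabla g$), which by Bernstein's inequality is controlled by $\|g\|_{L^2}$ and accounts for the additive $L^2$ term; everything else is routine summation of geometric series.
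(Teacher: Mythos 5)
Your argument is correct: the low-frequency Bernstein bound giving the $L^2$ term, the choice of the cut-off $N$ in terms of the time-integrated $\dot{C}^\beta$ norm (so that $N$ is constant in $s$), and the counting of the middle blocks are exactly what is needed, and together they yield the stated inequality. The paper itself does not prove Lemma \ref{lemma 2.2} but defers to \cite{LMZ}, where the proof proceeds by the same frequency-splitting and logarithmic cut-off device, so your proposal is essentially the standard (and the referenced) argument.
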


\section{Proof of Theorem 1.1}
Since local existence of smooth solutions has been derived by N.
Masmoudi \cite{Ma}, here we only focus on the a priori estimate
which is sufficient for proving Theorem \ref{thm 1.1}. As
explained in \cite{LZZ} or \cite{Ma}, to get the global existence
we just need to control the $L^\infty$-norm of $\nabla u$, i.e.,
$\|\nabla u\|_{L^\infty}$. Define the flow associated with $u$ by
$\Phi(t,x)$, which means $\Phi$ satisfies the ODEs,
\begin{equation}\left\{
\begin{array}{l} \partial_t \Phi (t, x) = u (t, \Phi(t,x)),\\
\Phi(t=0, x) = x.\end{array} \right.
\end{equation}\\
\textbf{Step I}~~\textbf{Uniform estimates for $\psi$ and $\tau$
with respect to $t$.}

Due to the special structure of the equation about $\psi(t, x,
R)$, we have the following bounded estimates of $\psi$. For $r>1$,
multiplying the third equation of (\ref{1.1}) by
$r\left|\frac{\psi}{\psi_\infty}\right|^{r-2}\frac{\psi}{\psi_\infty},$
and integrating over $B$, then we get
$$\displaystyle \partial_t \int_{B} \psi_\infty \left|\frac{\psi}{\psi_\infty}\right|^rdR
 + u\cdot \nabla \int_{B} \psi_\infty \left|\frac{\psi}{\psi_\infty}\right|^rdR
 = - \frac{4(r-1)}{r} \int_{B} \psi_\infty \left|\nabla_R \left(
 \frac{\psi}{\psi_\infty}\right)^{\frac{r}{2}}\right|^2 dR .$$
Therefore,
$$\displaystyle \int_{B}\psi_\infty \left|\frac{\psi}{\psi_\infty}\right|^r (t,\Phi (t, x), R) dR
\leq \int_{B} \psi_\infty
\left|\frac{\psi_0}{\psi_\infty}\right|^r(x, R) dR.$$ Since the
flow is incompressible, then
\begin{equation}\label{3-1}\|\psi\|_{L^\infty_{t,x} (\mathcal{L}^r)}\leq
\|\psi_0\|_{L^\infty_x (\mathcal{L}^r)},\ \ \
 \|\psi\|_{L^\infty_t (L^2_x (\mathcal{L}^r))} \leq
 \|\psi_0\|_{L^2_x(\mathcal{L}^r)}.\end{equation}

 To estimate $\tau$, we need a lemma.

 \vspace{2mm}\begin{lem}
 \label{lemma 3.1}For any $p$ such that $pk>1$, it
holds that
$$\int_{B} \frac{|\psi|}{1-|R|} dR \leq C\left(
\int_{B}
\frac{|\psi|^{p+1}}{\psi_\infty^p}\right)^{\frac{1}{p+1}}=
\|\psi\|_{\mathcal{L}^{p+1}}.$$
\end{lem}

\begin{proof} By the H\"{o}lder's inequality,
$$\begin{array}{ll}\displaystyle\int_{B} \frac{|\psi|}{1-|R|} dR & \leq \displaystyle C \int_{B}
\frac{1}{(1-|R|)^{1-kp/(p+1)}}
\frac{|\psi|}{\psi_\infty^{p/(p+1)}}dR\\[4mm]
& \leq \displaystyle C\left(\int_{B} \frac{1}{(1-|R|)^{1+ 1/p
-k}}\right)^{\frac{p}{p+1}} \left(\int_{B}
\frac{|\psi|^{p+1}}{\psi_\infty^p}dR\right)^{\frac{1}{p+1}}.
\end{array}$$
Since $pk>1$, the result follows.
\end{proof}

\vspace{2mm}Noting that
$$\tau_{ij}(t,x) = \int_{B} (R_i \otimes \nabla_{R_j} \mathcal{U} )\psi(t,x,R) dR,$$
one has
$$\begin{array}{ll}
|\tau(t,x)|& \leq \displaystyle C\int_{B} |\nabla_{R}
\mathcal{U}|\cdot |\psi(t,x,R) |dR\\
[3mm] & \leq Ck\displaystyle \int_{B}
\frac{|R|}{1-|R|^2}\cdot|\psi(t,x,R)|dR\\
[3mm]& \leq C \displaystyle \int_{B} \frac{|\psi(t,x,R)|}{1-|R|}dR
\\[3mm]& \leq C\|\psi(t,x, \cdot)\|_{\mathcal{L}^r},
\end{array}$$
where in the last step we used Lemma \ref{lemma 3.1}.

Hence, we have
\begin{equation}\label{3-2}
\|\tau\|_{L^\infty(0, T; L^2)}\leq
C\|\psi_0\|_{L^2_x(\mathcal{L}^r)},\ \ \ \ \|\tau\|_{L^\infty(0,T;
L^\infty)}\leq C\|\psi_0\|_{L^\infty_x(\mathcal{L}^r)}.
\end{equation}
\\\textbf{Step II}~~\textbf{A priori estimates for $u$.}

We need a useful lemma whose proof was established by Chemin and
Masmoudi \cite{CM} (see also \cite{LMZ}).

\vspace{2mm}\begin{lem}[{\rm Chemin-Masmoudi}]
 \label{lemma 3.2}Let $v$ be a solution of the Navier-Stokes equations with
initial data $v_0\in L^2(\mathbb{R}^2)$, and an external force
$f\in \tilde{L}^1(0,T; \dot{C}^{-1})\cap L^2(0,T; \dot{H}^{-1})$:
\begin{equation}\label{3-3}
\left\{
\begin{array}{l}
\partial_t v -\Delta v + (v\cdot \nabla)v + \nabla p = f,\ \ \ \mbox{in}\ \mathbb{R}^2\times (0,T)\\
{\rm div}~v=0,\ \ \ \mbox{in}\ \mathbb{R}^2\times (0,T)\\
v(t=0, x) = v_0(x),\ \ \ \mbox{in}\ \mathbb{R}^2
\end{array}
\right.
\end{equation}
Then we have the following a priori estimates,
$$\|v\|_{L^\infty(0,T; L^2)}^2 + 2\|\nabla v\|_{L^2(0,T; L^2)}^2 \leq
\|v_0\|_{L^2}^2 + \|f\|_{L^2(0,T; \dot{H}^{-1}))}^2,$$ and
$$\begin{array}{l}\|v\|_{\tilde{L}^1(0,T; \dot{C}^1)} \leq  \displaystyle C \left( \sup_q \|\Delta_q
v_0\|_{L^2}(1- \exp\{-c 2^{2q}T\})\right. \\ \ \ \ \ \ \  +
\displaystyle \left.(\|v_0\|_{L^2}+ \|f\|_{L^2(0,T; \dot{H}^{-1}})
\|\nabla v\|_{L^2(0, T; L^2)}^2 + \sup_q \int_0^T \|2^{-q}\Delta_q
f(s)\|_{L^\infty} ds\right).\end{array}$$
 Furthermore, if $f\in
L^1(0,T; \dot{C}^{-1})$, then $\forall \epsilon>0,$ there exists
$t_0(\epsilon)\in (0,T)$ such that
$$\|v\|_{\tilde{L}^1(t_0, T; \dot{C}^1)}\leq \epsilon.$$
\end{lem}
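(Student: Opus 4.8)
The plan is to prove the three assertions in turn, all by combining the Littlewood--Paley decomposition with the smoothing of the heat semigroup. The first inequality I would obtain by the classical energy method: take the $L^2(\mathbb{R}^2)$ inner product of the velocity equation in \eqref{3-3} with $v$; since ${\rm div}\,v=0$ kills both $\int(v\cdot\nabla)v\cdot v\,dx$ and $\int\nabla p\cdot v\,dx$, this gives $\frac12\frac{d}{dt}\|v\|_{L^2}^2+\|\nabla v\|_{L^2}^2=\langle f,v\rangle\le\|f\|_{\dot H^{-1}}\|\nabla v\|_{L^2}$, and absorbing the right-hand side into the dissipation and integrating in time yields the stated bound.

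For the $\tilde L^1(0,T;\dot C^1)$ estimate I would apply $\Delta_q$ to the momentum equation together with the Leray projector $\mathbb P$ onto divergence-free fields, write the convection term as ${\rm div}(v\otimes v)$, and use Duhamel's formula
\[ \Delta_q v(t)=e^{t\Delta}\Delta_q v_0+\int_0^t e^{(t-s)\Delta}\mathbb P\Delta_q f(s)\,ds-\int_0^t e^{(t-s)\Delta}\mathbb P\Delta_q\,{\rm div}\,(v\otimes v)(s)\,ds . \]
Then I would take the $L^\infty_x$ norm, multiply by $2^q$, integrate over $(0,T)$, and estimate the three pieces using Lemma \ref{lemma 2.1} (the heat-kernel bound $\|e^{t\Delta}\Delta_q w\|_{L^\infty}\le Ce^{-c2^{2q}t}\|\Delta_q w\|_{L^\infty}$, the two-dimensional Bernstein inequality $\|\Delta_q w\|_{L^\infty}\le C2^q\|\Delta_q w\|_{L^2}$, and the boundedness of $\mathbb P$ on $\Delta_q$-localized functions) together with Young's inequality in time (the kernel $t\mapsto e^{-c2^{2q}t}$ has $L^1(0,T)$-norm $\le(c2^{2q})^{-1}$). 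The free term then contributes $C\,2^{-q}\|\Delta_q v_0\|_{L^\infty}(1-e^{-c2^{2q}T})\le C\|\Delta_q v_0\|_{L^2}(1-e^{-c2^{2q}T})$; the forcing term contributes $C\,2^{-q}\int_0^T\|\Delta_q f\|_{L^\infty}\,ds=C\int_0^T\|2^{-q}\Delta_q f\|_{L^\infty}\,ds$; and for the convection term the factor $2^q$ from the $\dot C^1$ weight and the factor $2^q$ from the divergence are both absorbed by the heat smoothing, leaving the bound $C\int_0^T\|\Delta_q(v\otimes v)(s)\|_{L^\infty}\,ds$. Taking suprema over $q$ thus reduces the whole estimate to showing
\[ \sup_q\int_0^T\|\Delta_q(v\otimes v)(s)\|_{L^\infty}\,ds\le C\big(\|v_0\|_{L^2}+\|f\|_{L^2(0,T;\dot H^{-1})}\big)\|\nabla v\|_{L^2(0,T;L^2)}^2 . \]

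To prove this last claim I would split $v\otimes v$ by Bony's paraproduct decomposition. The high--high (remainder) part is harmless and uniform in $q$: $\|\Delta_q(\Delta_k v\,\Delta_{k'}v)\|_{L^\infty}\le C\|\Delta_k v\|_{L^\infty}\|\Delta_{k'}v\|_{L^\infty}\le C\|\Delta_k\nabla v\|_{L^2}\|\Delta_{k'}\nabla v\|_{L^2}$ for $|k-k'|\le1$, and summing over $k,k'\ge q-N$ keeps this $\le C\|\nabla v\|_{L^2}^2$ by Littlewood--Paley orthogonality, so after integration in time one gets $C\|\nabla v\|_{L^2(0,T;L^2)}^2$. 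The low--high paraproducts are the delicate part: the crude estimate $\|S_{k-1}v\|_{L^\infty}\le C2^k\|v\|_{L^2}$ leaves a spurious factor $2^q$, which has to be removed by exploiting the divergence-free transport structure of the block $S_{q-1}v\cdot\nabla\Delta_q v$ (it can be moved to the left of the equation as a transport term, which under the maximum principle does not grow the $L^\infty$-norm), and then completing with the two-dimensional interpolation $\|v\|_{L^4}^2\le C\|v\|_{L^2}\|\nabla v\|_{L^2}$ and the energy bound of the first part, which supplies the factor $\|v\|_{L^\infty(0,T;L^2)}\le\|v_0\|_{L^2}+\|f\|_{L^2(0,T;\dot H^{-1})}$. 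This frequency-by-frequency bookkeeping --- keeping every estimate uniform in $q$ while using only the energy quantities --- is the heart of the Chemin--Masmoudi argument, and it is the step I expect to be the main obstacle; it is precisely what replaces the ``losing a priori estimate'' technique here.

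Finally, for the smallness of the tail I would simply re-run the second step on $(t_0,T)$ instead of $(0,T)$: then $\|v\|_{\tilde L^1(t_0,T;\dot C^1)}$ is bounded by the same three quantities, with $v_0$ replaced by $v(t_0)$, the energy by its restriction to $(t_0,T)$, and all time integrals over $(t_0,T)$. As $t_0\uparrow T$ each of the three tends to $0$: the convection contribution because $\|\nabla v\|_{L^2(t_0,T;L^2)}\to0$ by integrability of $\|\nabla v\|_{L^2}^2$; the forcing contribution because $\sup_q 2^{-q}\int_{t_0}^T\|\Delta_q f\|_{L^\infty}\,ds\le\int_{t_0}^T\|f\|_{\dot C^{-1}}\,ds\to0$ when $f\in L^1(0,T;\dot C^{-1})$; and the free contribution because, choosing $t_0$ suitably (using $v\in L^2(0,T;\dot H^1)$ from the first part), $\sup_q\|\Delta_q v(t_0)\|_{L^2}(1-e^{-c2^{2q}(T-t_0)})\le C(T-t_0)^{1/2}\|\nabla v(t_0)\|_{L^2}$ (split at $2^{2q}(T-t_0)\sim1$, using $1-e^{-x}\le x$ on the low modes and $\|\Delta_q v(t_0)\|_{L^2}\le 2^{-q}\|\nabla v(t_0)\|_{L^2}$ on the high modes), which can be made as small as desired.
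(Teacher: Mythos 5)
Your handling of the linear ingredients is correct: the energy inequality, the Duhamel representation, the treatment of the free evolution term and of the forcing term (heat decay, Bernstein, Young in time), and also the final $t_0\uparrow T$ argument, including the splitting at $2^{2q}(T-t_0)\sim 1$, are all sound. But note that the paper does not prove Lemma \ref{lemma 3.2} at all --- it quotes it from Chemin--Masmoudi \cite{CM} (see also \cite{LMZ}) --- so the entire content of the lemma is the bilinear estimate, and that is exactly the step your proposal leaves open. After the (correct) bookkeeping you reduce everything to the claim $\sup_q\int_0^T\|\Delta_q(v\otimes v)(s)\|_{L^\infty}\,ds\le C(\|v_0\|_{L^2}+\|f\|_{L^2(0,T;\dot H^{-1})})\|\nabla v\|_{L^2(0,T;L^2)}^2$. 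As a statement about a general divergence-free field with finite energy norms this cannot be obtained from product/paraproduct estimates: for the low--high blocks $\Delta_q(S_{p-1}v\otimes\Delta_p v)$ with $p\sim q$ one needs $\|S_{p-1}v\|_{L^\infty}$, which in two dimensions is not controlled by $\|v\|_{L^2}+\|\nabla v\|_{L^2}$ (the failure of $\dot H^1\subset L^\infty$); every frequency-localized substitute ($\|S_{p-1}v\|_{L^\infty}\lesssim 2^{p}\|v\|_{L^2}$, or $2^{p/2}\|v\|_{L^4}$, or $\sqrt{p}\,\|\nabla v\|_{L^2}$ plus lower order) loses a factor growing with $q$, so the supremum over $q$ cannot be taken. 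You yourself identify this as ``the main obstacle,'' which means the heart of the lemma is acknowledged rather than proved.

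The proposed fix --- moving $S_{q-1}v\cdot\nabla\Delta_q v$ to the left of the equation as a transport term and invoking the maximum principle --- does not close the gap as described, for two reasons. First, it is inconsistent with the reduction you formulated: once the low-frequency convection is absorbed into the evolution operator it is no longer part of the source, so the displayed inequality is neither established nor the right target; the equation structure has to be used \emph{before} any such reduction, not after. Second, a maximum principle only says that transport by a divergence-free field does not increase $L^\infty$ norms; what the $\tilde{L}^1(0,T;\dot{C}^1)$ bound requires is that the damping $e^{-c2^{2q}(t-s)}$ survives the addition of the transport term, together with control of the commutator $[\Delta_q,\,S_{q-1}v\cdot\nabla]$, of the remaining high--low and high--high pieces, and of the pressure (the projector $\mathbb{P}$ does not commute with $S_{q-1}v\cdot\nabla$). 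Establishing such a smoothing estimate for the transport--diffusion blocks with constants depending only on the energy quantities is precisely the nontrivial content of the Chemin--Masmoudi argument in \cite{CM}; as written, your proposal assumes the conclusion of that step rather than proving it, and the rest of the proof (including the smallness on $(t_0,T)$) hinges on it. A minor additional point: your energy argument yields the stated inequality with coefficient $1$ rather than $2$ in front of $\|\nabla v\|_{L^2(0,T;L^2)}^2$, and your high--high bound produces $\|\nabla v\|_{L^2(0,T;L^2)}^2$ without the prefactor $\|v_0\|_{L^2}+\|f\|_{L^2(0,T;\dot H^{-1})}$; neither matters for the application, but they show the reduction does not literally match the claimed right-hand side.
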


\vspace{2mm}Particularly for our problem, since we have shown that
$\tau \in L^\infty(0,T; L^2)\cap L^\infty(0,T; L^\infty)$,
applying Lemma \ref{lemma 3.2}, we know that
$$u\in L^\infty(0,T; L^2)\cap L^2(0,T; \dot{H}^1)\cap \tilde{L}^1(0,T; \dot{C}^1)$$
and \begin{equation}\label{3-add}\forall \epsilon >0, \exists\
t_0(\epsilon)\in (0,T), \ \ \mbox{such that} \ \
\|u\|_{\tilde{L}^1(t_0, T; \dot{C}^1)}\leq \epsilon.\end{equation}

\vspace{2mm}\textbf{Step III~~H\"{o}lder estimates for $u$}

For $0\leq t < T,$ choose some $\alpha$ satisfying $0<\alpha <
\min{\{s-2,  1\}}$, define
$$N_q^r(t,x) = \int_{B} \psi_\infty \left|\frac{\Delta_q \psi(t,x,R)}{\psi_\infty}\right|^r dR
= \|\Delta_q \psi\|_{\mathcal{L}^r}^r(t,x),$$$$A(t)  = \sup_{0\leq
s <t }\|u(s,\cdot)\|_{\dot{C}^{1+\alpha}},\ \ \ B(t) = \sup_{0\leq
s <t}\|\tau(s,\cdot)\|_{\dot{C}^\alpha}$$
$$D(t) = \sup_{0\leq s < t}\sup_{q\in \mathbb{Z}}2^{\alpha q}\|N_q(s, \cdot)\|_{L^\infty}.$$
Here $\Delta_q$ is the frequency operator with respect to $x$.

Before the detailed estimates, we will introduce an inequality for
later use, which can be considered as an extension of H\"{o}lder
inequality.

 \vspace{2mm}\begin{lem}\label{lemma 3.3}For any $u\in
L^4(\mathbb{R}^2) \cap \dot{C}^{1+\alpha}(\mathbb{R}^2)$, there
holds that
$$\|u\otimes u\|_{\dot{C}^{\frac12+\alpha}} \leq C\|u\|_{L^4}\cdot \|u\|_{\dot{C}^{1+\alpha}},$$
with some constant $C$ independent of $u$.
\end{lem}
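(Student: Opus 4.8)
The plan is to prove the Hölder-type product estimate $\|u\otimes u\|_{\dot C^{1/2+\alpha}}\le C\|u\|_{L^4}\|u\|_{\dot C^{1+\alpha}}$ by means of the Littlewood-Paley decomposition and Bony's paraproduct. First I would write $u\otimes u$ in components; it suffices to bound a scalar product $fg$ with $f,g\in L^4\cap\dot C^{1+\alpha}$ and show $\|fg\|_{\dot C^{1/2+\alpha}}\le C\|f\|_{L^4}\|g\|_{\dot C^{1+\alpha}}+C\|g\|_{L^4}\|f\|_{\dot C^{1+\alpha}}$, then symmetrize. Recall that $\dot C^{s}=\dot B^{s}_{\infty,\infty}$, so the goal is to estimate $2^{(1/2+\alpha)q}\|\Delta_q(fg)\|_{L^\infty}$ uniformly in $q\in\mathbb Z$. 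Using Bony's decomposition, $fg=T_fg+T_gf+R(f,g)$, and it is enough to treat one paraproduct $T_fg=\sum_{k}S_{k-1}f\,\Delta_k g$ and the remainder; $T_gf$ is handled by swapping the roles of $f$ and $g$.

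The key pointwise input is the Bernstein inequality from Lemma~\ref{lemma 2.1}, which lets me trade integrability against frequency: $\|\Delta_k w\|_{L^\infty}\le C 2^{q/2}\|\Delta_k w\|_{L^4}$ on $\mathbb R^2$ (here $d=2$, $\tfrac1p-\tfrac1r=\tfrac14$, giving the factor $2^{k/2}$). For $T_fg$: only the terms with $k\sim q$ contribute to $\Delta_q(T_fg)$, and I bound $\|\Delta_q(S_{k-1}f\,\Delta_k g)\|_{L^\infty}\le\|S_{k-1}f\|_{L^4}\cdot\|\Delta_k g\|_{L^\infty}\cdot(\text{Bernstein on the product, }2^{q/2})$; then $\|S_{k-1}f\|_{L^4}\le C\|f\|_{L^4}$ and $\|\Delta_k g\|_{L^\infty}\le C2^{-(1+\alpha)k}\|g\|_{\dot C^{1+\alpha}}$. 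Collecting the factors gives $2^{(1/2+\alpha)q}\|\Delta_q(T_fg)\|_{L^\infty}\lesssim\sum_{k\sim q}2^{(1/2+\alpha)q}2^{q/2}2^{-(1+\alpha)k}\|f\|_{L^4}\|g\|_{\dot C^{1+\alpha}}$; since $k\sim q$ the powers of $2$ combine to $2^{(1/2+\alpha-1-\alpha+1/2)q}=2^{0}$, i.e.\ they cancel, and the (finitely many) $k\sim q$ terms sum to the desired bound. For the remainder $R(f,g)=\sum_{|k-k'|\le 1}\Delta_kf\,\Delta_{k'}g$, the relevant frequencies are $k\gtrsim q$; I estimate $\|\Delta_q R(f,g)\|_{L^\infty}\le\sum_{k\gtrsim q}\|\Delta_kf\|_{L^4}\|\tilde\Delta_kg\|_{L^4}\cdot 2^{q/2}$ (Bernstein for the product at frequency $2^q$, in $\mathbb R^2$), bound $\|\Delta_kf\|_{L^4}\le C\|f\|_{L^4}$ and $\|\tilde\Delta_kg\|_{L^4}\le C2^{-k/2}\|\tilde\Delta_kg\|_{L^\infty}\le C2^{-k/2}2^{-(1+\alpha)k}\|g\|_{\dot C^{1+\alpha}}$; then $2^{(1/2+\alpha)q}\|\Delta_qR\|_{L^\infty}\lesssim\sum_{k\gtrsim q}2^{(1/2+\alpha)q}2^{q/2}2^{-(3/2+\alpha)k}\|f\|_{L^4}\|g\|_{\dot C^{1+\alpha}}$, and since $1+\alpha>0$ the series $\sum_{k\gtrsim q}2^{-(3/2+\alpha)(k-q)}2^{(1/2+\alpha+1/2-3/2-\alpha)q}=\sum 2^{-(3/2+\alpha)(k-q)}2^{-q/2}$—wait, let me recompute: the exponent on $2^q$ is $1/2+\alpha+1/2-(3/2+\alpha)=-1/2$, so there is an unwanted $2^{-q/2}$; this signals that I should instead not lose a full $2^{q/2}$ on the product in the remainder but use $\|\Delta_q(\cdot)\|_{L^\infty}\le C2^{q\cdot 2/4}\|\Delta_q(\cdot)\|_{L^2}$ together with $\|\Delta_kf\,\Delta_{k'}g\|_{L^2}\le\|\Delta_kf\|_{L^4}\|\Delta_{k'}g\|_{L^4}$—same thing. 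The clean fix is: in the remainder keep $g$ in $L^\infty$ at its natural frequency ($\|\Delta_kg\|_{L^\infty}\le 2^{-(1+\alpha)k}\|g\|_{\dot C^{1+\alpha}}$) and put $f$ in $L^4$, applying Bernstein $\|\Delta_q w\|_{L^\infty}\le C2^{q/2}\|\Delta_q w\|_{L^{4}}$ only to the $L^4$ factor, so $\|\Delta_q(\Delta_kf\Delta_{k'}g)\|_{L^\infty}\le 2^{q/2}\|\Delta_kf\|_{L^4}\cdot... $ — the book-keeping needs care but the summable structure is there once the $L^4$ loss is placed on the low-regularity factor.

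The main obstacle is precisely this frequency/integrability balancing in the high-high remainder term: one must be careful to apply the $L^4\hookrightarrow L^\infty$ Bernstein loss (costing $2^{q/2}$) exactly where the budget permits — namely by interpolating the $L^4$ bound on one factor with the $\dot C^{1+\alpha}$ bound on the other, noting $1+\alpha-1/2=1/2+\alpha>0$ gives the convergent geometric series and the correct final homogeneity $1/2+\alpha$. The paraproducts $T_fg,T_gf$ are the easy almost-diagonal pieces. Once $\|fg\|_{\dot B^{1/2+\alpha}_{\infty,\infty}}=\|fg\|_{\dot C^{1/2+\alpha}}$ is controlled by $\|f\|_{L^4}\|g\|_{\dot C^{1+\alpha}}+\|g\|_{L^4}\|f\|_{\dot C^{1+\alpha}}$, setting $f=u_i$, $g=u_j$ and summing over the finitely many components yields $\|u\otimes u\|_{\dot C^{1/2+\alpha}}\le C\|u\|_{L^4}\|u\|_{\dot C^{1+\alpha}}$, which is the claim.
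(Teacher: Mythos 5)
Your final argument is correct and is essentially the paper's own proof: Bony's decomposition of $\Delta_q(u\otimes u)$ into low--high paraproducts plus the high--high remainder, with the $L^4\to L^\infty$ Bernstein loss ($2^{q/2}$ or $2^{k/2}$ in $\mathbb{R}^2$) absorbed by the $\dot{C}^{1+\alpha}$ decay $2^{-(1+\alpha)k}$, the paraproduct pieces being almost diagonal and the remainder summing as a geometric series because $\frac12+\alpha>0$ --- which is exactly how the paper handles $I_1$ and $I_2$. Only note that your discarded first pass at the remainder rested on the false reverse inequality $\|\tilde{\Delta}_k g\|_{L^4}\lesssim 2^{-k/2}\|\tilde{\Delta}_k g\|_{L^\infty}$ (and the $L^2\to L^\infty$ Bernstein factor in $d=2$ is $2^{q}$, not $2^{q/2}$), but your ``clean fix'' --- putting one factor in $L^4$ and the other in $L^\infty$ weighted by $2^{-(1+\alpha)k}$ --- avoids this and, once the routine bookkeeping $2^{(\frac12+\alpha)q}2^{q/2}2^{-(1+\alpha)k}=2^{(1+\alpha)(q-k)}$ is written out, closes the estimate as in the paper.
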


\begin{proof}
For any $q\in \mathbb{Z}$, using Bony's para-product
decomposition\cite{B}, we have
$$\begin{array}{ll}
\|\Delta_q (u\otimes u)\|_{L^\infty} \cdot 2^{(\frac12 + \alpha)
q} & \leq \displaystyle 2\sum_{|p-q|\leq 5} \|\Delta_q (S_{p-1}u
\otimes \Delta_p u)\|_{L^\infty}\cdot 2^{(\frac12 + \alpha)q}\\[3mm] &\
\ \  + \displaystyle\sum_{p\geq q-3}\sum_{|p-r|\leq 1} \|\Delta_q
(\Delta_p u \otimes \Delta_r u)\|_{L^\infty}\cdot
2^{(\frac12+\alpha)q}\\[3mm]
& \triangleq  2I_1 + I_2.
\end{array}$$
$I_1$ can be estimated as
$$\begin{array}{ll} I_1 & \leq  \displaystyle C\sum_{|p-q|\leq 5}
\|S_{p-1}u \otimes \Delta_p u\|_{L^4}\cdot 2^{(1+\alpha)q}\\[5mm]
 & \leq \displaystyle C\sum_{|p-q|\leq 5}\|S_{p-1} u\|_{L^4}
 \cdot \|\Delta_p u\|_{L^\infty}\cdot 2^{(1+\alpha)q}\\[5mm]
 & \leq \displaystyle C \sum_{|p-q|\leq 5} \|u\|_{L^4}
 \cdot 2^{(1+\alpha)p}\|\Delta_p u\|_{L^\infty} \cdot 2^{(1+\alpha)(q-p)}\\[5mm]
 & \leq \displaystyle C\|u\|_{L^4} \cdot \|u\|_{\dot{C}^{1+\alpha}},\end{array}$$
 here the first inequality is due to Lemma \ref{lemma 2.1}.

While $I_2$ can be estimated as
$$\begin{array}{ll}
I_2 & = \displaystyle \sum_{p\geq q-3}\sum_{|p-r|\leq 1}\|\Delta_q
(\Delta_p u\otimes \Delta_r u)\|_{L^\infty}\cdot 2^{(\frac12
+\alpha)
q}\\[5mm]
& \leq C\displaystyle \sum_{p\geq q-3}\sum_{|p-r|\leq 1}\|\Delta_p
u\|_{L^\infty} \cdot \|\Delta_r u\|_{L^4}\cdot
2^{(\frac12+\alpha)q}\cdot 2^{\frac12 r}\\[5mm]
& \leq C\displaystyle \sum_{p\geq q-3} 2^{(1+\alpha)p}\|\Delta_p
u\|_{L^\infty} \cdot \|u\|_{L^4}\cdot 2^{(\frac12+\alpha)(q-p)}\\[5mm]
& \leq C\|u\|_{\dot{C}^{1+\alpha}} \cdot\|u\|_{L^4},
\end{array}$$
here in the second inequality we also used Lemma \ref{lemma 2.1}.
These above estimates complete the proof of Lemma \ref{lemma 3.3}.
\end{proof}

\vspace{2mm}First, applying $\Delta_q$ to the first equation of
the FENE system, then we obtain
$$\partial_t \Delta_q u- \nu \Delta \Delta_q u + \nabla \Delta_q p = \nabla\cdot \Delta_q
(\tau - u\otimes u),$$ hence
$$\displaystyle \Delta_q u = e^{\nu t\Delta}\Delta_q u_0 +  \int_0^t e^{\nu (t-s)\Delta}
\mathbb{P}(\Delta_q \nabla\cdot (\tau - u\otimes u))ds,$$ where
$\mathbb{P}$ is the Helmoltz-Weyl projection operator.

\begin{equation}\label{3-addadd}\begin{array}{l} \ \ \ 2^{q(1+\alpha)}\|\Delta_q u\|_{L^\infty}(t)\\[2mm]\leq
C\displaystyle e^{-c\nu2^{2q}t}\|\Delta_q u_0\|_{L^\infty}
2^{q(1+\alpha)}+ C\int_0^t e^{-c\nu2^{2q} (t-s)}
2^{q(1+\alpha)}\|\Delta_q \nabla\cdot (\tau - u\otimes
u)\|_{L^\infty}ds\\[3mm] \leq  \displaystyle C\|u_0\|_{\dot{C}^{1+\alpha}} +  C\int_0^t
e^{-c 2^{2q} (t-s)} \cdot 2^{q(1+\alpha)}\left(\|\nabla\cdot
\Delta_q \tau\|_{L^\infty}+ \|\nabla\cdot \Delta_q( u\otimes
u)\|_{L^\infty}\right)ds
\end{array}\end{equation}

Applying Lemma \ref{lemma 2.1}, we obtain $$\begin{array}{ll}&
\displaystyle\int_0^t e^{-c 2^{2q}(t-s)}2^{q(1+\alpha)}
\|\nabla\cdot\Delta_q \tau\|_{L^\infty}ds\\[3mm]
 \leq & C\displaystyle\int_0^t e^{-c 2^{2q}(t-s)} 2^{q(1+\alpha)} 2^q \|\Delta_q
 \tau\|_{L^\infty}ds\\[3mm]
 \leq & C\displaystyle\int_0^t e^{-c 2^{2q} (t-s)}2^{2q} \cdot
 2^{\alpha q} \|\Delta_q \tau\|_{L^\infty}ds \\[3mm]
 \leq & CB(t).
 \end{array}$$
 On the other hand, applying Lemma \ref{lemma 2.1} again, we obtain
 $$\begin{array}{ll} & 2^{q(1+\alpha)} \displaystyle \int_0^t e^{-c 2^{2q}(t-s)} \|\nabla\cdot
 \Delta_q (u\otimes u)\|_{L^\infty} ds \\[3mm]
 \leq & C\displaystyle \int_0^t e^{-c 2^{2q}(t-s)}2^{q(2+\alpha)}\|\Delta_q (u\otimes
 u)\|_{L^\infty}ds\\[3mm]
\leq & C\displaystyle\int_0^t e^{-c 2^{2q}(t-s)}2^{\frac32
q}\|u\otimes u\|_{\dot{C}^{\frac12 +\alpha}}ds\\[3mm]
 \leq & C \displaystyle\int_0^t e^{-c 2^{2q}(t-s)} 2^{\frac32 q} \|u\|_{L^4}\cdot
 \|u\|_{\dot{C}^{1+\alpha}} ds\\[3mm]
 \leq &\displaystyle C\left(\int_0^t \|u\|_{L^4}^4 \|u\|_{\dot{C}^{1+\alpha}}^4
 ds\right)^{\frac14} \cdot \left(\int_0^t e^{-c2^{2q}(t-s)}2^{2q} ds\right)^{\frac34}\\[3mm]
 \leq &C \displaystyle \left(\int_0^t \|u\|_{L^2}^2 \|\nabla u\|_{L^2}^2
  \|u\|_{\dot{C}^{1+\alpha}}^4 ds\right)^{\frac14},
 \end{array}$$
where we used Lemma \ref{lemma 3.3} and interpolation inequality
for the third
 and last inequality respectively.

 Taking the supreme of both sides of (\ref{3-addadd})with respect to $q$, one gets
 $$\displaystyle \|u(t)\|_{\dot{C}^{1+\alpha}}^4 \leq C(\|u_0\|_{\dot{C}^{1+\alpha}} + B(t))^4
 + C\left( \int_0^t \|u(s)\|_{L^2}^2 \|\nabla u(s)\|_{L^2}^2 \|u(s)\|_{\dot{C}^{1+\alpha}}^4 ds\right).$$
By the Gronwall's inequality and the estimates in Step II, we have
\begin{equation}\label{3-4}
A(t) \leq C(\|u_0\|_{\dot{C}^{1+\alpha}} + B(t))\leq C(1+B(t)).
\end{equation}
As a result of the relationship between $\tau$ and $\psi$ and
Lemma \ref{lemma 3.1},
$$\begin{array}{ll} B(t) & =\displaystyle
\sup_{0\leq s <t} \sup_q 2^{\alpha q}\|\Delta_q \tau(s, \cdot)
\|_{L^\infty}\\[2mm]
& \leq \displaystyle \sup_{0\leq s <t}\sup_q 2^{\alpha q} \left\|
\int_B |\Delta_q\psi| \cdot |\nabla_R \mathcal{U}| dR
\right\|_{L^\infty}
\\[3mm] & \displaystyle \leq  C \sup_{0\leq s <t}\sup_q 2^{\alpha q} \|\Delta_q
\psi\|_{L_x^\infty (\mathcal{L}^r)}(s) \\[2mm]
& = C D(t)
\end{array} $$

Combining the above inequality and (\ref{3-4}),
\begin{equation}\label{3-5}
A(t) \leq C(1+ D(t)).
\end{equation}

\vspace{2mm}\textbf{Step IV~~H\"{o}lder Estimates of $\psi$.}

The remaining part is to estimate the $\dot{C}^\alpha$-norm of
$\psi$.
 Take the operator $\Delta_q$ to the third
 equation, then
 \begin{equation}\label{3-6}\begin{array}{ll}\partial_t \Delta_q \psi + u\cdot \nabla \Delta_q \psi
 & ={\rm div}_R(-W(u) \cdot R \Delta_q \psi) + {\rm div}_R\left(\psi_\infty
 \nabla_R \left(\frac{\Delta_q \psi}{\psi_\infty}\right)\right)\\[3mm]&
 \ \ \ + {\rm div}_R~([\Delta_q, -W(u)]\cdot R\psi)
 + [u\cdot\nabla,
 \Delta_q]\psi\end{array}\end{equation}

Multiply (\ref{3-6}) by $r\left|\frac{\Delta_q
\psi}{\psi_\infty}\right|^{r-2} \frac{\Delta_q \psi}{\psi_\infty}$
and integrate over $B$,
$$\begin{array}{ll}
& \ \ \ \displaystyle\partial_t N_q^r + u\cdot \nabla N_q^r +
\frac{4(r-1)}{r}\int_{B}\psi_\infty \left|\nabla_R
\left(\frac{\Delta_q \psi}{\psi_\infty}\right)^{r/2}\right|^2
dR\\[4mm]
& \leq \displaystyle \int_B {\rm div}_R\left( [ \Delta_q, -W(u)]
\cdot R \psi\right) \cdot \left|\frac{\Delta_q
\psi}{\psi_\infty}\right|^{r-2}\frac{\Delta_q
\psi}{\psi_\infty}rdR\\[4mm] & \ \ \
+ \displaystyle\int_B |[u\cdot \nabla, \Delta_q]\psi|\cdot
\left|\frac{\Delta_q
\psi}{\psi_\infty}\right|^{r-1}r  dR\\
[4mm]& \triangleq J_1 + J_2.
\end{array}$$
By the Young's inequality and the H\"{o}lder inequality,
$$\begin{array}{ll}
|J_1| &  \displaystyle\leq C \|[\Delta_q, W(u)]\cdot
R\psi\|_{\mathcal{L}^r}^2 \cdot N_q^{r-2} + \frac{r-1}{r}\int_B
\psi_\infty \left|\nabla_R\left(\frac{\Delta_q
\psi}{\psi_\infty}\right)^{r/2}\right|^2dR,
\end{array}$$
$$J_2 \leq C\|[u\cdot \nabla, \Delta_q]\psi\|_{\mathcal{L}^r}\cdot N_q^{r-1}.$$
Hence
$$\partial_t N_q^r + u\cdot\nabla N_q^r
\leq C\|[\Delta_q, W(u)]\cdot R\psi\|_{\mathcal{L}^r}^2\cdot
N_q^{r-2} + C \|[u\cdot \nabla,
\Delta_q]\psi\|_{\mathcal{L}^r}\cdot N_q^{r-1},$$ which implies
that
\begin{equation}\label{3-7}\begin{array}{ll}2^{\alpha q r} \|N_q\|_{L^\infty}^r(t)
\leq & \displaystyle C 2^{\alpha q r} \int_0^t \|[\Delta_q,
W(u)]\cdot R\psi\|_{L^\infty_x(\mathcal{L}^r)}^2
\cdot \|N_q\|_{L^\infty}^{r-2} (s) ds\\
[3mm] & \displaystyle + C 2^{\alpha qr} \int_0^t \|[u\cdot\nabla,
\Delta_q]\psi\|_{L^\infty_x(\mathcal{L}^r)} \cdot
\|N_q\|_{L^\infty}^{r-1}(s)ds.\end{array}\end{equation} Note that
$$\begin{array}{ll} \left|[\Delta_q, W(u)]\cdot R\psi\right|&  =\displaystyle
\left|\int_{\mathbb{R}^2} h(y) [W(u)(x) - W(u)(x- 2^{-q}y) ]\cdot
(R\psi(x- 2^{-q}
y))dy\right|\\
[3mm] & \displaystyle \leq C 2^{-\alpha q}
\|W(u)\|_{\dot{C}^\alpha} \int_{\mathbb{R}^2}
|h(y)|\cdot|R\psi(x-2^{-q}y)|dy,
\end{array}$$ Then we get that
$$\|[\Delta_q, W(u)]\cdot R\psi\|_{L^\infty_x(\mathcal{L}^r)}
\leq C\|\nabla
u\|_{\dot{C}^\alpha}\|\psi\|_{L^\infty_x(\mathcal{L}^r)} \cdot
2^{-\alpha q}\leq C\|\nabla u\|_{\dot{C}^\alpha}\cdot 2^{-\alpha
q}.$$  And by the Bony's para-product formula,
$$\begin{array}{ll} |[u\cdot \nabla, \Delta_q]\psi| & \leq
\displaystyle \sum_{p\geq q-3}\sum_{|p-q^{\prime}|\leq 1}\left|
[\Delta_p u\cdot \nabla ,
\Delta_q ]\Delta_{q^{\prime}} \psi\right| \\[5mm] & \displaystyle+ \sum_{|q-
q^{\prime}|\leq 5}\left|[S_{q^{\prime}-1} u\cdot \nabla,
\Delta_q]\Delta_{q^{\prime}}\psi\right| + \displaystyle\sum_{|q-
q^{\prime}|\leq 5}\left| [\Delta_{q^{\prime}}u\cdot\nabla,
\Delta_q]S_{q^{\prime}-1}\psi\right|
\end{array}$$
with each term having the following estimate:

Since $$\begin{array}{ll}&[\Delta_p u\cdot\nabla,
\Delta_q]\Delta_{q^{\prime}} \psi(s, x, R)\\[3mm] =&
\displaystyle\int_{\mathbb{R}^2}h(y) [\Delta_p u(s,x) -
\Delta_pu(s, x-2^{-q}y)]\cdot \nabla \Delta_{q^{\prime}} \psi(s,
x-2^{-q}y, R) dy,\end{array}$$ and $$\nabla
\Delta_{q^{\prime}}\psi(x) = \displaystyle \int_{\mathbb{R}^2}
2^{2q^{\prime}}h(2^{q^{\prime}}y)\nabla\psi(x-y) dy =
-\int_{\mathbb{R}^2}2^{3q^{\prime}}\nabla
h(2^{q^{\prime}}y)\psi(x-y) dy$$ then
$$\begin{array}{ll} & \displaystyle\sum_{p\geq q-3}\sum_{|p-q^{\prime}|\leq 1}
 2^{\alpha q}\| [\Delta_p u\cdot
\nabla , \Delta_q ]\Delta_{q^{\prime}}
\psi\|_{L^\infty_x(\mathcal{L}^r)} (s) \\[5mm] \leq & C\displaystyle \sum_{p\geq q-3}
\sum_{|p-q^{\prime}|\leq 1}2^{\alpha q}2^{-q} \|\nabla \Delta_p
u\|_{L^\infty}(s)
\cdot \|\nabla\Delta_{q^{\prime}}\psi\|_{L^\infty_x(\mathcal{L}^r)}(s)\\[5mm]\leq
&C \displaystyle\sum_{p\geq q-3}\sum_{|p-q^{\prime}|\leq 1}
2^{\alpha q} 2^{-q}\|\nabla \Delta_p u\|_{L^\infty}(s)\cdot
2^{q^{\prime}}\|
\psi\|_{L^\infty_x(\mathcal{L}^r)}(s)\\
[5mm]\leq & C\displaystyle \sum_{p\geq q-3}2^{\alpha p} \|\nabla
\Delta_p u\|_{L^\infty}(s)\cdot
2^{\alpha(q-p)}\|\psi\|_{L_x^\infty(\mathcal{L}^r)}(s)
\\[4mm] \leq & CA(s) \cdot
\|\psi\|_{L^\infty_x(\mathcal{L}^r)}(s)\leq CA(s).
\end{array}$$
Similarly,
$$\begin{array}{ll}& \displaystyle\sum_{|q-q^{\prime}|\leq
5}2^{\alpha q} \|[S_{q^{\prime}-1} u\cdot\nabla,
\Delta_q]\Delta_{q^{\prime}} \psi\|_{L^\infty_x(\mathcal{L}^r)}(s) \\[5mm]\leq
& \displaystyle C\sum_{|q-q^{\prime}|\leq 5}2^{\alpha q-q}
\|\nabla S_{q^{\prime}-1} u\|_{L^\infty}(s)\|\nabla
\Delta_{q^{\prime}}
\psi\|_{L^\infty_x(\mathcal{L}^r)}(s)\\[5mm] \leq & \displaystyle
C\sum_{|q-q^{\prime}|\leq 5}\|\nabla u\|_{L^\infty}(s) \cdot
2^{\alpha q^{\prime}}
\|\Delta_{q^{\prime}}\psi\|_{L^\infty_x(\mathcal{L}^r)}(s)\\[5mm]
\leq & C\|\nabla u\|_{L^\infty}(s)\cdot D(s).
\end{array}$$
$$\begin{array}{ll}&
\displaystyle\sum_{|q^{\prime}-q|\leq 5}2^{\alpha q}
\|[\Delta_{q^{\prime}}u\cdot\nabla, \Delta_q]S_{q^{\prime}
-1}\psi\|_{L^\infty_x(\mathcal{L}^r)}(s)\\[5mm]
\leq & C \displaystyle \sum_{|q^{\prime}-q|\leq 5}2^{\alpha q -
q}\|\nabla
\Delta_{q^{\prime}}u\|_{L^\infty}(s)\cdot\|S_{q^{\prime}-1}\nabla
\psi\|_{L^\infty_x(\mathcal{L}^r)}(s)\\[5mm]
\leq & C A(s)\| \psi\|_{L^\infty_x(\mathcal{L}^r)}(s)\leq CA(s).
\end{array}$$
Therefore, taking the supreme of (\ref{3-7}) with respect to $q$,
by the Young's inequality and (\ref{3-5}),
$$\begin{array}{ll}
D(t)^r & \leq \displaystyle\int_0^t C\left[A(s)^r +
D(s)^r\right] ds + C\int_0^t\|\nabla u\|_{L^\infty}(s) D(s)^r ds\\
& \leq \displaystyle \int_0^t C\left[1+D(s)^r\right]+ C\|\nabla
u\|_{L^\infty}D(s)^r ds
\end{array}$$
Then the Gronwall's inequality implies that
$$D(t)\leq C(D(0)+1) e^{C\int_0^t (\|\nabla u\|_{L^\infty} + 1)ds}.$$

Hence according to Lemma \ref{lemma 2.2}, $$\begin{array}{ll} e +
D(t) & \leq C(D(0) + 1)C_*\exp\{C
\int_{t_*}^t( \|\nabla u\|_{L^\infty}+1)ds\}\\[3mm]
& \leq CC_* (D(0)+1) \exp\{C(1+ \int_{t_*}^t \|u\|_{L^2}ds+
\epsilon \ln(e+ (t-t_*)A(t)) )\}\\[3mm]
& \leq CC_*(D(0)+1)\exp\{C[1+ \epsilon \ln(e+D(t))]\}\\[3mm]
&\leq CC_* (D(0) +1)(e+ D(t))^{C\epsilon},\end{array}$$ where
$C_*$ is some positive constant depending on the solution $u$ on
$[0, t_*]$. Choosing $\epsilon = \frac{1}{2C}$,
$$D(T) \leq [CC_*(D(0)+1)]^2.$$
Then by (\ref{3-5}), $A(T)$ is bounded, which implies that
$\|\nabla u\|_{L^\infty}$ is bounded on $[0,T]$ since
$$\|\nabla u\|_{L^\infty}\leq C\left(\|u\|_{L^2}+ \|u\|_{\dot{C}^{1+\alpha}}\right).$$

\section{Proof of Theorem 1.2}

The proof of Theorem \ref{thm 1.2} is similar to that of Theorem
\ref{thm 1.1}, we just give the sketch. As above, to get the
global existence we only need to control $\|\nabla
u\|_{L^\infty},$ see \cite{CMa}.

\vspace{2mm}\textbf{Step I~~Uniform estimates for $\psi$ and
$\tau$}

Integrate the third equation of (\ref{1.2}) on $M$, then
$$\partial_t \int_M\psi(t,x,m)dm + u\cdot \nabla\int_M \psi(t,x,m) dm
=0.$$ By the maximum principle for evolutionary equation, $\psi$
is always nonnegative. Hence
\begin{equation}\label{4.1}\|\psi\|_{L_x^1\cap L_x^\infty(L^1(M))}(t) =
\|\psi_0\|_{L_x^1\cap L_x^\infty(L^1(M))}\end{equation}
\begin{equation}\label{4.2}\|\tau\|_{L^\infty(0,T; L^2)}\leq
C\left(\|\psi_0\|_{L_x^4(L^1(M))}^2 +
\|\psi_0\|_{L_x^2(L^1(M))}\right),\end{equation}
\begin{equation}\label{4.3} \|\tau\|_{L^\infty(0,T; L^\infty)}\leq
C(\|\psi_0\|_{L_x^\infty(L^1(M))}^2 + 1)\end{equation} Since
$s>\frac{d}{2} +1$ and $M$ is a smooth compact manifold without
boundary, \begin{equation}\label{4.4}
\|\psi\|_{L_{t,x}^\infty(H^{-s}(M))}\leq
C\|\psi_0\|_{L_x^\infty(L^1(M))}.\end{equation}

\vspace{2mm}\textbf{Step II~~A priori estimates for $u$}

Applying Lemma \ref{lemma 3.2} and the estimates
(\ref{4.2})(\ref{4.3}), we get that
$$u\in L^\infty(0,T; L^2)\cap L^2(0,T; \dot{H}^1)\cap \tilde{L}^1(0,T; \dot{C}^1)$$
and $\forall\ \epsilon >0$, there exists $t_0(\epsilon)\in (0,T),$
such that $\|u\|_{\tilde{L}^1(t_0, T; \dot{C}^1)}\leq \epsilon.$

\vspace{2mm}\textbf{Step III~~H\"{o}lder estimates for $u$}

Denote $$H= (-\Delta_g + I)^{-\frac{s}{2}}, \ \ \ N_q^2(t,x) =
\int_M |H\Delta_q \psi(t,x,m)|^2 dm,$$
$$A(t) = \sup_{0\leq s <t}\|u(s,\cdot)\|_{\dot{C}^{1+\alpha}},\ \
\ B(t)= \sup_{0\leq s<t }\|\tau(s, \cdot)\|_{\dot{C}^\alpha},$$
$$D(t) = \sup_{0\leq s<t }\sup_{q\in\mathbb{Z}}2^{\alpha q}\|N_q(s,\cdot)\|_{L^\infty}.$$
As in section 3, we have the estimates
$$A(t)\leq C \left(\|u_0\|_{\dot{C}^{1+\alpha}} + B(t)\right)\leq C(1+B(t)),$$
and by (\ref{4.4}), $\forall q\in \mathbb{Z}$,
$$\|N_q(t,\cdot)\|_{L^\infty}\leq C\|H\psi(t, \cdot,
\cdot)\|_{L_x^\infty(L^2(M))}\leq
C\|\psi_0\|_{L_x^\infty(L^1(M))},$$
$$\begin{array}{l}\ \ \ \ 2^{\alpha q}\|\Delta_q \tau_{ij}(s,x)\|_{L^\infty}
\\[3mm] \leq 2^{\alpha q}\left\|\int_M\int_M
H_{m_1}^{-1}H_{m_2}^{-1} \gamma_{ij}^{(2)}\Delta_q
(H_{m_1}\psi(s,x,m_1) H_{m_2}\psi(s,x,m_2))dm_1
dm_2\right\|_{L^\infty}\\[3mm] \ \  +2^{\alpha q}\left\|\int_M H^{-1}\gamma_{ij}^{(1)}(m)
H\Delta_q\psi(s,x,m)dm\right\|_{L^\infty} \\[3mm] \leq \displaystyle
2^{\alpha q}\sum_{|p-q|\leq 5}\left\|\int_M\int_M
H_{m_1}^{-1}H_{m_2}^{-1} \gamma_{ij}^{(2)}(m_1, m_2)
S_{p-1}H\psi(m_1) \Delta_p
H\psi(m_2) dm_1 dm_2 \right\|_{L^\infty}\\[4mm]
\ \  +\displaystyle 2^{\alpha q}\sum_{|p-q|\leq
5}\left\|\int_M\int_M H_{m_1}^{-1}H_{m_2}^{-1}
\gamma_{ij}^{(2)}(m_1,m_2) \Delta_p H\psi(m_1)S_{p-1}H\psi(m_2)
dm_1dm_2\right\|_{L^\infty}\\[3mm] \ \ +\displaystyle 2^{\alpha q}\sum_{p\geq
q-3}\sum_{|p-r|\leq 1} \left\| \int_M\int_M
H_{m_1}^{-1}H_{m_2}^{-1}\gamma_{ij}^{(2)}
\Delta_pH\psi(m_1)\Delta_rH\psi(m_2)
dm_1dm_2\right\|_{L^\infty}\\[4mm]\ \ + C 2^{\alpha
q}\|N_q(s,\cdot)\|_{L^\infty}\\[2mm]
\leq CD(s) \|H\psi\|_{L_x^\infty(L^2(M))}(s)+ CD(s)\leq CD(s)
\end{array}$$
which implies that $$B(t)\leq CD(t).$$ Therefore,
\begin{equation}\label{4.5}
A(t)\leq C(1+D(t)).
\end{equation}

\vspace{2mm}\textbf{Step IV~~H\"{o}lder estimates for $\psi$}

Take the operator $H$ and $\Delta_q$ to the third equation,
multiply by $\Delta_q H\psi$ and integrate over $M$, then
$$\begin{array}{l}\ \ \ \displaystyle \frac{1}{2}\partial_t\int_{M}|\Delta_q
H\psi|^2 dm + \frac12 u\cdot \nabla \int_{M}|\Delta_q H\psi|^2dm +
\int_{M}|\nabla_g
\Delta_q H\psi|^2dm \\[3mm]=\displaystyle \int_M [u\cdot \nabla, \Delta_q]H\psi\cdot \Delta_q H\psi dm
-\int_M \Delta_q H {\rm div}_g~(G(u,\psi)\psi)\cdot \Delta_q H\psi
dm\\[3mm]
= \displaystyle \int_M [u\cdot \nabla, \Delta_q]H\psi\cdot
\Delta_q H\psi dm -\partial_j u_i\int_M H{\rm
div}_g~(c_\alpha^{ij}\Delta_q
\psi)\cdot H\Delta_q \psi dm \\[4mm] \ \ \ \ \displaystyle +\int_M [\partial_j u_i, \Delta_q
]H(c_\alpha^{ij}\psi)\cdot \nabla_g\Delta_q H\psi dm +\int_M
\Delta_q
(\nabla_g \mathcal{U} H\psi) \cdot \nabla_g \Delta_q H\psi dm\\[4mm]\ \ \ \
+\displaystyle \int_M \Delta_q [H\nabla_g \mathcal{U},
H^{-1}]H\psi\cdot \nabla_g \Delta_q H\psi dm
\end{array}$$ By the Young's inequality, we have
\begin{equation}\label{4-add}\begin{array}{l}\ \ \ \ \ \ \frac12\|\Delta_q
H\psi\|_{L_x^\infty(L^2(M))}^2(t)
\\[3mm]\leq \displaystyle 2\int_0^t \left\| \int_M [u\cdot\nabla, \Delta_q]
H\psi\cdot \Delta_q H\psi dm \right\|_{L^\infty}ds \\[3mm]\ +\displaystyle \int_0^t
\left\|\partial_j u_i\int_M H{\rm div}_g~(c_\alpha^{ij} \Delta_q
\psi)\Delta_q H\psi dm \right\|_{L^\infty}ds\\[3mm]\ + \displaystyle\frac34\int_0^t
\left\|[\partial_j u_i, \Delta_q]
H(c_\alpha^{ij}\psi)\right\|_{L^\infty_x(L^2(M))}^2ds\\[3mm]\displaystyle\ +\frac34
\int_0^t\left\| \Delta_q
(\nabla_g\mathcal{U}H\psi)\right\|_{L^\infty_x(L^2(M))}^2ds
+\frac34
\int_0^t\left\|\Delta_q ([H\nabla_g \mathcal{U}, H^{-1}]H\psi)\right\|_{L^\infty_x(L^2(M))}^2ds\\[4mm]
\triangleq\displaystyle\int_0^t (J_1 + J_2 + J_3 + J_4 +J_5) ds.
\end{array}\end{equation}
As in section 3, applying (\ref{4.5}), for every $q\in
\mathbb{Z}$,
$$\begin{array}{ll}2^{2\alpha q}J_1(s) &\leq CA(s)\|H\psi\|_{L_x^\infty(L^2(M))}\cdot D(s)
+ C\|\nabla u\|_{L^\infty}\cdot D(s)^2 \\[2mm]&\leq C(D(s)^2+1)(\|\nabla
u\|_{L^\infty}(s) + 1).\end{array}$$ $J_2$, $J_4$ and $J_5$ are
estimated as in \cite{CFTZ},
$$\begin{array}{ll}&\displaystyle \left|\int_M H {\rm div}_g (c_{\alpha}^{ij}\Delta_q \psi )
\cdot \Delta_q H\psi dm\right|\\[3mm]
\leq & \displaystyle\left|\int_M {\rm
div}_g~(c_\alpha^{ij}H\Delta_q \psi)\cdot \Delta_q H\psi
dm\right|+ \left|\int_M \left([H{\rm div}_g c_\alpha^{ij},
H^{-1}]H\Delta_q \psi\right)\cdot \Delta_q
H\psi dm\right|\\[4mm]\leq & \displaystyle \left|
\int_M \frac{1}{2}({\rm div}_g c_\alpha^{ij})|\Delta_q H\psi|^2
dm\right|+ \left|\int_M \left([H{\rm div}_g c_\alpha^{ij},
H^{-1}]H\Delta_q \psi\right)\cdot \Delta_q H\psi
dm\right|\\[4mm]
\leq & C\|\Delta_q H\psi\|_{L^2(M)}^2,\end{array}$$which deduces
that
$$2^{2\alpha q}J_2(s)\leq C 2^{2\alpha q}\|\nabla u\|_{L^\infty}(s)\cdot N_q^2(s)
\leq C\|\nabla u\|_{L^\infty}(s)\cdot D(s)^2.$$ Using Bony's
decomposition,
$$\begin{array}{l}\ \ \ \ 2^{\alpha q}\|\Delta_q (\nabla_g \mathcal{U} H\psi)\|_{L^\infty_x(L^2(M))}
\\[2mm]\leq 2^{\alpha q}\displaystyle \sum_{|p-q|\leq
5}\left\|S_{p-1}\nabla_g \mathcal{U} \cdot \Delta_p
H\psi\right\|_{L_x^\infty(L^2(M))} \\[4mm]\ \ \ \ \displaystyle + \sum_{|p-q|\leq
5}\left\|\Delta_p \nabla_g \mathcal{U}\cdot
S_{p-1}H\psi\right\|_{L_x^\infty(L^2(M))}\\[4mm]
\displaystyle\ \ \ \ + \sum_{p\geq q-3}\sum_{|p-r|\leq
1}\left\|\Delta_p \nabla_g \mathcal{U} \cdot \Delta_r
H\psi\right\|_{L_x^\infty(L^2(M))}\\[3mm]
\displaystyle\leq C\|\nabla_g
\mathcal{U}\|_{L_x^\infty(L^2(M))}\cdot \sup_p 2^{\alpha
p}\|\Delta_p H\psi\|_{L_x^\infty(L^2(M))}\\[3mm]\ \ \ \ +\displaystyle C\sup_p 2^{\alpha p}
\|\Delta_p \nabla_g \mathcal{U}\|_{L_x^\infty(L^2(M))}\cdot
\|H\psi\|_{L^\infty_x(L^2(M))}\end{array}$$ Combining the
relationship between $\mathcal{U}$ and $\psi$,
 $$2^{2\alpha q} J_4(s) \leq C2^{2\alpha q}\|H\psi\|_{L_x^\infty(L^2(M))}^2(s) \cdot N_q^2(s)\leq
 CD(s)^2.$$
 Similarly,
$$2^{2\alpha q} J_5(s)\leq C\|H\psi\|_{L_x^\infty(L^2(M))}^2(s) \cdot \sup_p 2^{2\alpha p}N_p^2(s)\leq
 CD(s)^2.$$
 Since $$[\partial_j u_i, \Delta_q]H(c_\alpha^{ij}\psi)= \int_{\mathbb{R}^2}
 h(y)[\partial_j u_i(x) - \partial_j u_i(x-2^{-q}y)]H(c_\alpha^{ij}\psi)(x-2^{-q}y, m)dy,$$
$$\begin{array}{ll}2^{2\alpha q}J_3(s) &\leq
C2^{2\alpha q}\cdot 2^{-2\alpha q}\|\nabla
u\|_{\dot{C}^\alpha}^2(s)\|H(c_\alpha^{ij}\psi)\|_{L^\infty_x(L^2(M)))}^2(s)\\[3mm]&
\leq CA(s)^2 \left[\|c_\alpha^{ij} H\psi\|_{L^\infty_x(L^2(M))}^2
+ \|H[c_\alpha^{ij},
H^{-1}]H\psi\|_{L^\infty_x(L^2(M))}^2\right]\\[3mm]&\leq CA(s)^2
\|H\psi\|_{L^\infty_x(L^2(M))}^2\\[2mm] &\leq CA(s)^2.
\end{array}
 $$

Therefore, taking the supreme of $(\ref{4-add})\times 2^{2\alpha
q}$ with respect to $t$ and $q$,
$$D(t)^2 \leq C\int_0^t (\|\nabla u\|_{L^\infty}+1)(1+ D(s)^2) ds.$$
The remaining proof is just the same as that in section 3. We omit
the details.

\section*{Acknowledgement}
The work was in part supported by NSFC (grants No. 10801029 and
10911120384), FANEDD, Shanghai Rising Star Program (10QA1400300)
and SGST 09DZ2272900. Part of the work was done when Zhen Lei was
visiting the Institute of Mathematical Sciences of CUHK. Zhen Lei
would like to thank the hospitality of Professor Zhouping Xin and
the Institute.

 \vspace{2mm}

\end{document}